\DeclareMathOperator{\parse}{parse}
\DeclareMathOperator{\maxFlxPalBase}{H}
\DeclareMathOperator{\maxFlxPal}{maxFlxPal}
\DeclareMathOperator{\revUniOccur}{ruo}
\DeclareMathOperator{\revUniOccurBase}{M}
\DeclareMathOperator{\reducedPrefix}{rdcPrf}
\DeclareMathOperator{\palclosure}{pc}
\DeclareMathOperator{\stdPalRep}{stdPalRep}
\DeclareMathOperator{\rw}{R}
\DeclareMathOperator{\lcp}{lcp}
\DeclareMathOperator{\lcs}{lcs}
\DeclareMathOperator{\lps}{lps}
\DeclareMathOperator{\lpps}{lpps}
\DeclareMathOperator{\lpp}{lpp}
\DeclareMathOperator{\lppp}{lppp}
\DeclareMathOperator{\Prefix}{Prf}
\DeclareMathOperator{\Suffix}{Suf}
\DeclareMathOperator{\StdExt}{StdExt}
\DeclareMathOperator{\MaxStdExt}{MaxStdExt}
\DeclareMathOperator{\rtrim}{rtrim}
\DeclareMathOperator{\ltrim}{ltrim}
\DeclareMathOperator{\trim}{trim}
\DeclareMathOperator{\occur}{occur}
\DeclareMathOperator{\reduced}{rdcWrd}
\DeclareMathOperator{\eliminated}{elmWrd}
\DeclareMathOperator{\FlxPal}{T}
\DeclareMathOperator{\Alphabet}{A}
\DeclareMathOperator{\Factor}{F}
\DeclareMathOperator{\MaxLengthWord}{MaxLenWord}
\DeclareMathOperator{\MinLengthWord}{MinLenWord}
\newcommand{\mlb}{2}
\newtheorem{theorem}{Theorem}[section]
\newtheorem{proposition}[theorem]{Proposition}
\newtheorem{corollary}[theorem]{Corollary}
\newtheorem{lemma}[theorem]{Lemma}
\newtheorem{definition}[theorem]{Definition}
\newtheorem{example}[theorem]{Example}
\newtheorem{remark}[theorem]{Remark}
\title{Construction Of A Rich Word Containing Given Two Factors}
\author{Josef Rukavicka\thanks{Department of Mathematics,
Faculty of Nuclear Sciences and Physical Engineering, CZECH TECHNICAL UNIVERSITY
IN PRAGUE
(josef.rukavicka@seznam.cz).}}
\begin{document}
\maketitle
\setcounter{footnote}{0}

\sloppy

\begin{abstract}
A finite word $w$ with $\vert w\vert=n$ contains at most $n+1$ distinct palindromic factors. If the bound  $n+1$ is attained, the word $w$ is called \emph{rich}. Let $\Factor(w)$ be the set of factors of the word $w$.
It is known that there are pairs of rich words that cannot be factors of a common rich word.
However it is an open question how to decide for a given pair of rich words $u,v$ if there is a rich word $w$ such that $\{u,v\}\subseteq \Factor(w)$. We present a response to this open question:\\
If $w_1, w_2,w$ are rich words, $m=\max{\{\vert w_1\vert,\vert w_2\vert\}}$, and $\{w_1,w_2\}\subseteq \Factor(w)$ then there exists also a rich word $\bar w$ such that $\{w_1,w_2\}\subseteq \Factor(\bar w)$ and $\vert \bar w\vert\leq  m2^{k(m)+2}$, where $k(m)=(q+1)m^2(4q^{10}m)^{\log_2{m}}$ and $q$ is the size of the alphabet. Hence it is enough to check all rich words of length equal or lower to $m2^{k(m)+2}$ in order to decide if there is a rich word containing factors $w_1,w_2$.
\end{abstract}

\section{Introduction}
In the last years there have appeared several articles dealing with rich words; see, for instance, \cite{BuLuGlZa2}, \cite{GlJuWiZa}, \cite{10.1007/978-3-319-66396-8_7}, \cite{Vesti2014}. Recall that a palindrome is a word that reads the same forwards and backwards, for example ``noon'' and ``level''. Rich words are those words that contain the maximal number of palindromic factors. It is known that a word of length $n$ can contain at most $n+1$ palindromic factors including the empty word. The notion of a rich word has been extended also to infinite words. An infinite word is called rich if its every finite factor is rich \cite{RUKAV_PALCOMPLXRICH_preprint}, \cite{10.1007/978-3-319-66396-8_7}. 
\\

Let $\lps(w)$ and $\lpp(w)$ denote the longest palindromic suffix and the longest palindromic prefix of a word $w$, respectively. The authors of \cite{BuLuGlZa2} showed the following property of rich words: 
\begin{proposition}
\label{tuob5ou6e}
If $r,t$ are two factors of a rich word $w$ such that $\lps(r)=\lps(t)$ and $\lpp(r)=\lpp(t)$, then $r=t$.
\end{proposition}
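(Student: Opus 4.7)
The plan is to argue by contradiction using the unioccurrence characterization of rich words: for every factor $f$ of a rich word, both $\lps(f)$ and $\lpp(f)$ occur exactly once in $f$ (the latter by applying the prefix characterization to $f^R$). Set $q = \lpp(r) = \lpp(t)$ and $p = \lps(r) = \lps(t)$, and by symmetry assume $|r| \le |t|$.

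First I would dispose of the degenerate cases $|r| \in \{|q|, |p|\}$. If $|r| = |q|$, then $r = q$ is a palindrome and $p = \lps(q) = q$, so $q = p$ is both a prefix and a suffix of $t$; unioccurrence of $q$ in $t$ forces these two occurrences to coincide, giving $t = q = r$, contradicting $r \ne t$. The case $|r| = |p|$ is symmetric. Hence assume $|r| > \max(|q|, |p|)$ and fix occurrences $r = w[a..b]$ and $t = w[c..d]$ inside $w$. Three further configurations are immediately excluded: $a = c$ would make $r$ a proper prefix of $t$ and place $p$ at two distinct positions inside $t$, violating unioccurrence of $\lps(t)$; the case $b = d$ is symmetric via $\lpp(t)$; and $c < a \le b < d$ (strict containment of the $r$-interval inside the $t$-interval) would place the prefix $q$ of $r$ as an extra internal occurrence of $q$ inside $t$. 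The remaining configurations are staggered; WLOG $a < c$ and $b < d$.

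Form the factor $g = w[a..d]$, which has $r$ as prefix and $t$ as suffix. In $g$, the palindrome $q$ occurs at the distinct positions $0$ and $c - a$, and $p$ occurs at the distinct positions $b - a - |p| + 1$ and $d - a - |p| + 1$. Since any palindromic prefix of $g$ of length $\le |r|$ is a palindromic prefix of $r$ (hence of length at most $|q|$), unioccurrence of $\lpp(g)$ together with the non-unioccurrence of $q$ in $g$ forces $|\lpp(g)| > |r|$; symmetrically $|\lps(g)| > |t|$. The main obstacle --- and the crux of the argument --- is to convert these two palindromic extensions into a contradiction. By palindromic symmetry, $\lpp(g)$ has $r^R$ as suffix and therefore ends with $q$, placing an extra occurrence of $q$ at position $|\lpp(g)| - |q|$ in $g$; similarly $\lps(g)$ starts with $t^R$ and hence with $p$, placing an extra occurrence of $p$ inside $g$. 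A careful case analysis on where these extra occurrences fall --- within $r$, within $t$, inside the gap $w[b+1..c-1]$ (if $b < c$), inside the overlap region (if $b \ge c$), or straddling boundaries --- will in each sub-case violate unioccurrence of $q$ or $p$ in $r$, $t$, or $g$ itself, completing the proof.
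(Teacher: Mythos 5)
The paper offers no proof of this proposition at all --- it is quoted as a known result from the cited literature --- so your argument has to stand entirely on its own. Everything up to your last paragraph does: the unioccurrence of $\lps(f)$ and $\lpp(f)$ in every factor $f$ of a rich word (via Proposition \ref{oij5498fr654td222gh} applied to $f$ and $f^R$), the elimination of the degenerate cases $|r|\in\{|q|,|p|\}$, the exclusion of the aligned and nested configurations, the reduction by reversal to the staggered case, and the deductions $|\lpp(g)|>|r|$ and $|\lps(g)|>|t|$ are all correct.

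But the proof stops exactly where you yourself locate the crux. The closing sentence ``a careful case analysis \dots will in each sub-case violate unioccurrence of $q$ or $p$'' is an assertion, not an argument, and at least two of the sub-cases do not close by occurrence-counting alone. First, unioccurrence is only available to you for $q$ and $p$ \emph{inside the windows of $r$ and $t$}, and for $\lpp(g)$, $\lps(g)$ inside $g$; a third occurrence of $q$ (or $p$) that lands in the gap $w[b+1..c-1]$ or straddles a boundary contradicts none of these, because $q$ and $p$ need not be unioccurrent in $g$ itself. Second, the extra suffix occurrence of $q$ inside $\lpp(g)$ can legitimately coincide with the occurrence of $q$ at the start of $t$ (namely when $|\lpp(g)|=(c-a)+|q|$), in which case no new occurrence is produced at all; handling this situation seems to require a genuinely different tool --- for instance complete returns to palindromes being palindromes (Proposition \ref{yy65se85bj5d}), or an induction on $|g|$ --- none of which appears in your sketch. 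So the decisive step of the proof is missing, and it is not routine to fill in.
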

Two related open questions can be found: 
\begin{itemize}
\item
In \cite{Vesti2014}: Is the condition in Proposition \ref{tuob5ou6e} sufficient for joining two rich words $u$ and $v$ into factors of a same rich word?
\item
In \cite{10.1007/978-3-319-66396-8_7}: We do not know how to decide whether two rich words $u$ and $v$ are factors of a common rich word $w$.
\end{itemize}
In the current article we present a response to the question from \cite{10.1007/978-3-319-66396-8_7} in the following form: We prove that if $w_1, w_2,w$ are rich words, $m=\max{\{\vert w_1\vert,\vert w_2\vert\}}$, and $\{w_1, w_2\}\subseteq \Factor(w)$ then there exists a rich word $\bar w$ such that $\{w_1, w_2\}\subseteq \Factor(\bar w)$ and $\vert \bar w\vert\leq  m2^{k(m)+2}$, where $k(m)=(q+1)m^2(4q^{10}m)^{\log_2{m}}$ and $q$ is the size of the alphabet. Thus it is enough to check all rich words of length equal or lower to $m2^{k(m)+2}$ in order to decide if there is a rich word containing factors $w_1,w_2$.
\\

We describe the basic ideas of the proof.
If $w$ is a rich word, then let $a$ be a letter such that $\lps(wa)=a\lpps(w)a$, where $\lpps$ denotes the longest proper palindromic suffix. It is known and easy to show that $wa$ is a rich word \cite{Vesti2014}. Thus every rich word $w$ can be richly extended to a word $wa$. We will call $wa$ a \emph{standard extension} of $w$. If there is a letter $b$ such that $a\not =b$ and $wb$ is also a rich word, then we call the longest palindromic suffix of $wb$ a \emph{flexed palindrome}; the explication of the terminology is that $wb$ is not a \emph{standard extension} of $w$, hence $wb$ is ``flexed'' from the \emph{standard extension}. \\
We define a set $\Gamma$ of pairs of rich words $(w,r)$, where $r$ is a \emph{flexed palindrome} of $w$, the longest palindromic prefix of $w$ does not contain the factor $r$, and $\vert r\vert\geq \vert \bar r\vert$ for each \emph{flexed palindrome} $\bar r$ of $w$. If $(w,r)\in \Gamma$, $w_1$ is the prefix of $w$ with $\vert w_1\vert=\vert r\vert-1$ and $w_2$ is the suffix of $w$ with $\vert w_2\vert=\vert r\vert-1$ then we construct a rich word $\bar w$ possessing the following properties:
\begin{itemize}
\item
The word $w_1$ is a prefix of $\bar w$.
\item
The word $w_2$ is a suffix of $\bar w$.
\item
The number of occurrences of $r$ in $\bar w$ is strictly smaller than the number of occurrences of $r$ in $w$.
\item
The set of \emph{flexed palindromes} of $\bar w$ is a subset of the set of \emph{flexed palindromes} of $w$.
\end{itemize}
Iterative applying of this construction will allow us for a given rich word $w$ with a prefix $w_1$ and a suffix $w_2$ to construct a rich word $t$ containing factors $w_1,w_2$ and having no \emph{flexed palindrome} longer than $m$, where $m=\max\{\vert w_1\vert, \vert w_2\vert\}$.\\
Another important, but simple, observation is that if $w$ is a rich word with prefix $u$ such that the number of \emph{flexed palindromes} in $w$ is less than $k$ and $u$ has exactly one occurrence in $w$ then there is an upper bound for the length of $w$. We show this upper bound as a function of $k$ and consequently we derive an upper bound for the length of $t$. 

\section{Preliminaries}
Let $\Alphabet$ be a finite alphabet with $q=\vert\Alphabet\vert$. The elements of $\Alphabet$ will be called letters.\\
Let $\epsilon$ denote the empty word.\\
Let $\Alphabet^*$ be the set of all finite words over $\Alphabet$ including the empty word, let $\Alphabet^n\subset \Alphabet^*$ be the set of all words of length $n$, and let $\Alphabet^+=\Alphabet^*\setminus \{\epsilon\}$.\\  
Let $\rw\subset \Alphabet^*$ denote the set of all rich words and let $\rw^+=\rw\cap \Alphabet^+$.\\
Let $\Factor(w)\subset \Alphabet^*$ denote the set of all factors of $w\in \Alphabet^*$; we state explicitly that $\epsilon, w\in \Factor(w)$. \\
Let $\Factor(S)=\bigcup_{v\in S}\Factor(v)$, where $S\subseteq \Alphabet^*$.\\
Let $\Factor_p(w)\subseteq\Factor(w)$ be set of all palindromic factors of $w\in \Alphabet^*$.\\
Let $\Factor(w,r)=\{u\mid u\in \Factor(w)\mbox{ and }r\not \in \Factor(u)\}\subseteq \Factor(w)$, where $w,r\in \Alphabet^*$. The set $F(w,r)$ contains factors of $w$ avoiding the factor $r$.\\
Let $\Factor_p(w,r)=\Factor_p(w)\cap\Factor(w,r)$.\\
Let $\Prefix(w)$ and $\Suffix(w)$ be the set of all prefixes and all suffixes of $w\in A^*$ respectively; we define that $\{\epsilon, w\}\subseteq \Prefix(w)\cap\Suffix(w)$.\\
Let $w^R$ denote the reversal of $w\in A^*$; formally if $w=w_1w_2\dots w_k$ then $w^R=w_k\dots w_2w_1$, where $w_i\in \Alphabet$ and $i\in \{1,2,\dots,k\}$. In addition we define that $\epsilon^R=\epsilon$.\\
\\
Let $\lps(w)$ and $\lpp(w)$ denote the longest palindromic suffix and the longest palindromic prefix of $w\in \Alphabet^*$ respectively. We define that $\lps(\epsilon)=\lpp(\epsilon)=\epsilon$.\\ 
Let $\lpps(w)$ and $\lppp(w)$ denote the longest proper palindromic suffix and the longest proper palindromic prefix of $w\in \Alphabet^*$ respectively, where $\vert w\vert\geq 2$.\\ 
Let $\trim(w)=v$, where $v,w\in \Alphabet^*$, $x,y\in \Alphabet$, $w=xvy$, and $\vert w\vert\geq 2$.\\
Let $\rtrim(w)=v$, where $v,w\in \Alphabet^*$, $y\in \Alphabet$, $w=vy$, and $\vert w\vert\geq 1$.\\
Let $\ltrim(w)=v$ , where $v,w\in \Alphabet^*$, $x\in \Alphabet$, $w=xv$, and $\vert w\vert\geq 1$.

\begin{example}
\begin{itemize}
\item $\Alphabet=\{1,2,3,4,5\}$.
\item $w=124135$.
\item $\trim(w)=2413$.
\item $\ltrim(w)=24135$.
\item $\rtrim(w)=12413$.
\end{itemize}
\end{example}
\noindent
Let $\palclosure(w)$ be the palindromic closure of $w\in \Alphabet^*$; formally $\palclosure(w)=uvu^R$, where $w=uv$ and $v=\lps(w)$.\\
\\
Let $\MinLengthWord(U)$ and $\MaxLengthWord(U)$ be the shortest and the longest word from the set $U$ respectively, where either $U\subseteq \Prefix(w)$ or $U\subseteq \Suffix(w)$ for some $w\in A^*$. If $U=\emptyset$ then we define $\MinLengthWord(U)=\epsilon$ and $\MaxLengthWord(U)=\epsilon$.\\
\\
Let $\lcp(w_1,w_2)$ be the longest common prefix of words $w_1,w_2\in \Alphabet^*$; formally $\lcp(w_1,w_2)=\MaxLengthWord(\Prefix(w_1)\cap\Prefix(w_2))$.\\
Let $\lcs(w_1,w_2)$ be the longest common suffix of words $w_1,w_2\in \Alphabet^*$; formally $\lcs(w_1,w_2)=\MaxLengthWord(\Suffix(w_1)\cap\Suffix(w_2))$.\\
Let $\occur(u,v)$ be the number of occurrences of $v$ in $u$, where $u,v\in \Alphabet^*$ and $\vert v\vert>0$; formally $\occur(u,v)=\vert \{w\mid w\in \Suffix(u)\mbox{ and }v\in \Prefix(w)\}\vert$.\\

Recall the notion of a \emph{complete return} \cite{GlJuWiZa}: Given a word $w$ and factors $r,u\in \Factor(w)$, we call the factor $r$ a \emph{complete return} to $u$ in $w$ if $r$ contains exactly two occurrences of $u$, one as a prefix and one as a suffix. \\

We list some known properties of rich words that we use in our article. All of them can be found, for instance, in  \cite{GlJuWiZa}.
\begin{proposition}
\label{yy65se85bj5d}
If $w,u\in R^+$ and $u\in \Factor_p(w)$ then all complete returns to $u$ in $w$ are palindromes.
\end{proposition}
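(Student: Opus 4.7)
The plan is to fix an arbitrary complete return $r$ to $u$ in $w$ and analyze its longest palindromic suffix $\lps(r)$. Since $r$ ends with $u$ and $u$ is itself a palindrome, we immediately have $\vert\lps(r)\vert\geq \vert u\vert$.

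First I would dispose of the case $\lps(r)=u$. The key ingredient is the standard characterization of rich words (available in \cite{GlJuWiZa}) asserting that a word is rich if and only if for each of its factors the longest palindromic suffix is unioccurrent in that factor. Because $r\in \Factor(w)$ and $w$ is rich, $r$ is itself rich, so $\lps(r)=u$ would force $u$ to have a single occurrence in $r$. This contradicts the very definition of a complete return, which requires $u$ to occur at least twice in $r$ (once as a prefix and once as a suffix).

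Hence $\vert\lps(r)\vert>\vert u\vert$, so $\lps(r)$ strictly contains $u$ as a proper suffix. Since $\lps(r)$ is a palindrome and $u=u^R$, reversing shows that $u$ must also occur as a prefix of $\lps(r)$. This produces an occurrence of $u$ in $r$ starting at position $\vert r\vert-\vert\lps(r)\vert<\vert r\vert-\vert u\vert$. But $r$ contains exactly two occurrences of $u$, namely at positions $0$ and $\vert r\vert-\vert u\vert$, so we are forced into $\vert r\vert-\vert\lps(r)\vert=0$, i.e., $\lps(r)=r$. Thus $r$ is a palindrome, as required.

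The only delicate point I foresee is the appeal to the unioccurrence characterization of rich words; this is the real engine of the argument, while the remainder is bookkeeping about the two positions at which $u$ occurs in $r$. Everything else, including the use of $u=u^R$ to force a matching prefix from a matching suffix, is a routine palindrome-symmetry manipulation.
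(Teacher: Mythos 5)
Your proof is correct. Note that the paper itself offers no proof of this proposition: it is quoted as a known property of rich words with a pointer to \cite{GlJuWiZa}, so there is nothing internal to compare against. Your argument — rule out $\lps(r)=u$ via the unioccurrence of the longest palindromic suffix in a rich word (the paper's Proposition \ref{oij5498fr654td222gh} applied to the prefix $r$ of the rich factor $r$), then use the palindromic symmetry of $\lps(r)$ together with $u=u^R$ to manufacture a third occurrence of $u$ unless $\lps(r)=r$ — is precisely the standard proof from that reference, and every step checks out.
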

\begin{proposition}
If $w \in \rw$ and $p\in \Factor(w)$ then $p, p^R\in \rw$.
\end{proposition}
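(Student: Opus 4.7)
The plan is to establish both parts of the proposition via a single characterization of richness. I would invoke the following equivalence: a word $w$ is rich if and only if, for every factor $v$ of $w$, the longest palindromic suffix $\lps(v)$ is \emph{unioccurrent} in $v$, i.e.\ occurs exactly once. Granting this characterization, the claim $p \in \rw$ follows immediately, since every factor of $p$ is also a factor of $w$ and hence inherits the unioccurrence property; $p$ therefore satisfies the same characterization and is rich.

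The substantive work is the factor-level characterization itself. Its prefix-version---$w$ is rich iff every prefix $v$ of $w$ has $\lps(v)$ unioccurrent in $v$---is a standard counting argument: appending a letter $a$ to a prefix $v$ can add at most one new palindromic factor, namely $\lps(va)$, and adds one precisely when $\lps(va) \notin \Factor(v)$; reaching the maximum count $|w|+1$ of distinct palindromic factors therefore forces unioccurrence at every step. Extending this from prefixes to arbitrary factors is the main obstacle, and the cleanest route is through Proposition \ref{yy65se85bj5d}. Concretely, suppose for contradiction that some factor $v$ of $w$ has $\lps(v)$ occurring at least twice in $v$; set $u = \lps(v)$, which is a palindrome and therefore rich (a standard fact about palindromes). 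Taking the last two occurrences of $u$ inside $v$, the factor $r \in \Factor(v) \subseteq \Factor(w)$ spanning them is a complete return to $u$. Proposition \ref{yy65se85bj5d} then forces $r$ to be a palindrome. But $r$ is a suffix of $v$ strictly longer than $u$, contradicting the maximality of $u = \lps(v)$.

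For the reversal claim $p^R \in \rw$, the argument is short. The map $u \mapsto u^R$ is a length-preserving bijection $\Factor(p) \to \Factor(p^R)$ that restricts to a bijection on palindromic factors, since every palindrome is fixed by reversal. Hence $|\Factor_p(p^R)| = |\Factor_p(p)| = |p| + 1 = |p^R| + 1$, so $p^R$ is rich directly from the definition.
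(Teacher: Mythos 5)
The paper does not actually prove this proposition; it is listed among known properties and cited from \cite{GlJuWiZa}, so there is no in-paper argument to compare against and your proposal must stand on its own. Its architecture is reasonable: the reversal claim is handled correctly (reversal is a bijection on factors that fixes the set of palindromic factors), and reducing $p\in\rw$ to a factor-level unioccurrence characterization, with Proposition \ref{oij5498fr654td222gh} supplying the converse direction, is a legitimate plan. The complete-return argument for the forward direction is also essentially the right mechanism.

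The genuine gap is the step you dismiss as ``a standard fact about palindromes'': the claim that $u=\lps(v)$, being a palindrome, is automatically rich. That is false. For example, $abcacba$ is a palindrome but is not rich: its prefix $abca$ has $\lps(abca)=a$, which is not unioccurrent there, so by Proposition \ref{oij5498fr654td222gh} the word is not rich. You need $u\in\rw^{+}$ because Proposition \ref{yy65se85bj5d}, as stated in this paper, assumes it --- but ``a palindromic factor of a rich word is rich'' is an instance of the very proposition you are proving, so as written the argument is circular. The gap is repairable: either invoke the complete-return theorem in the form that assumes only $w$ rich (which is how \cite{GlJuWiZa} prove it), or run your argument as an induction on $\vert w\vert$ --- prefixes of rich words are rich directly from Proposition \ref{oij5498fr654td222gh}, so the induction hypothesis covers all factors of $\rtrim(w)$, and if $\lps(v)$ of a suffix $v$ of $w$ occurs twice then it already occurs in $\rtrim(w)$ and is rich by induction, which legitimizes the appeal to Proposition \ref{yy65se85bj5d}. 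Alternatively, the shortest standard proof avoids complete returns entirely: the defect $\vert v\vert+1-\vert\Factor_p(v)\vert$ is invariant under reversal and non-decreasing when a letter is appended, hence non-decreasing under passing to superwords, so factors of zero-defect words have zero defect.
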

\begin{proposition}
\label{oij5498fr654td222gh}
A word $w$ is rich if and only if every prefix $p\in \Prefix(w)$ has a unioccurrent palindromic suffix.
\end{proposition}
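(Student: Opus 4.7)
The plan is to count palindromic factors by induction on the prefix length, so the crux is an auxiliary claim: for every $p\in \Alphabet^*$ and $a\in \Alphabet$, setting $w=pa$, the difference $\vert\Factor_p(w)\vert-\vert\Factor_p(p)\vert$ is either $0$ or $1$, and the value $1$ is attained iff $\lps(w)$ is unioccurrent in $w$, in which case the unique new palindromic factor is $\lps(w)$.

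First I would prove this auxiliary claim. Any palindromic factor of $w$ that does not end at the last position of $w$ is already a factor of $p$, so the only candidates for new palindromic factors are palindromic suffixes of $w$. For any palindromic suffix $u$ of $w$ with $\vert u\vert < \vert\lps(w)\vert$, the nesting of suffixes forces $u$ to be a suffix of $\lps(w)$; since $\lps(w)$ is itself a palindrome, $u$ also appears as a prefix of $\lps(w)$, and this prefix-occurrence of $u$ ends strictly before the last position of $w$, exhibiting $u$ as a factor of $p$. Thus the only possibly new palindromic factor is $\lps(w)$, and this is new iff it has no occurrence in $p$.

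Given the auxiliary claim, an easy induction on $\vert w\vert$ shows $\vert\Factor_p(w)\vert\leq \vert w\vert+1$, with equality iff $\lps(p')$ is unioccurrent in $p'$ for every non-empty $p'\in\Prefix(w)$. Since $w$ is rich iff $\vert\Factor_p(w)\vert=\vert w\vert+1$, this is already the equivalence between richness and unioccurrence of every $\lps(p')$. To match the wording of the proposition, I would add the short remark that ``some palindromic suffix of $p'$ is unioccurrent'' is equivalent to ``$\lps(p')$ is unioccurrent in $p'$'': any palindromic suffix $u$ of $p'$ strictly shorter than $\lps(p')$ is both a suffix of $\lps(p')$ and, by palindromicity of $\lps(p')$, a prefix of $\lps(p')$, giving two distinct occurrences of $u$ in $p'$, so only $\lps(p')$ itself can be unioccurrent.

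The only real obstacle is the auxiliary claim, which is the classical Droubay--Justin--Pirillo observation on extensions by a single letter; once it is in hand, the rest is a direct induction on prefix length combined with the short symmetry remark above.
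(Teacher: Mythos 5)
Your proof is correct. Note, however, that the paper does not prove this proposition at all: it is listed among ``known properties of rich words'' and cited from the literature (Glen--Justin--Widmer--Zamboni), so there is no in-paper argument to compare against. What you have written is the standard Droubay--Justin--Pirillo argument, and it is sound: the auxiliary claim (a one-letter extension adds at most one palindromic factor, namely $\lps(pa)$, and adds it exactly when $\lps(pa)$ is unioccurrent) is justified correctly via the suffix-of-a-palindrome-is-also-a-prefix observation, the induction from $\vert\Factor_p(\epsilon)\vert=1$ gives the bound $\vert w\vert+1$ with the right equality condition, and your closing remark correctly reconciles ``has a unioccurrent palindromic suffix'' with ``$\lps$ is unioccurrent,'' since every shorter nonempty palindromic suffix occurs at least twice. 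The only cosmetic loose end is the empty prefix, for which the condition is vacuous under any reasonable convention. So: a correct, self-contained proof of a fact the paper imports as a black box.
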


\section{Standard Extensions and Flexed Palindromes}
We start with a formal definition of a \emph{standard extension} and a \emph{flexed palindrome} introduced at the beginning of the article.
\begin{definition}
Let $j\geq 0$ be a nonnegative integer, $w\in \rw$, and $\vert w\vert\geq \mlb$. We define $\StdExt(w,j)$ as follows:
\begin{itemize}
\item
$\StdExt(w,0)=w$.
\item
$\StdExt(w,1)=wa$ such that $\lps(wa)=a\lpps(w)a$ and $a\in \Alphabet$.
\item
$\StdExt(w,j)=\StdExt(\StdExt(w,j-1),1)$, where $j>1$.
\end{itemize}
Let $\StdExt(w)=\{\StdExt(w,j)\mid j\geq 0\}$. If $p\in \StdExt(w)$ then we call $p$ a \emph{standard extension} of $w$. 
\\
Let 
$\FlxPal(w) = \{\lps(ub)\mid ub\in \Prefix(w)\mbox{ and }b\in \Alphabet\mbox{ and }ub\not=\StdExt(u,1)\}$. If $r\in \FlxPal(w)$ then we call $r$ a \emph{flexed palindrome} of $w$.
\end{definition}

For a given rich word $w\in \rw$ having a \emph{flexed palindrome} $r$ we define a \emph{standard palindromic replacement} of $r$ to be the longest palindromic suffix of a \emph{standard extension} of a prefix $p$ of $w$ such that $\lps(px)=r$, where $px$ is a prefix of $w$ and $x\in \Alphabet$. The idea is that we can ``replace'' $r$ with the \emph{standard palindromic replacement}.
\begin{definition}
Let $\stdPalRep(w,r)=\lps(\StdExt(h,1))$, where $w,r\in \rw$,  $r\in \FlxPal(w)$, $hx\in \Prefix(w)$, $x\in \Alphabet$, and $\lps(hx)=r$.\\ We call $\stdPalRep(w,r)$ a \emph{standard palindromic replacement} of $r$ in $w$.
\end{definition}

\begin{example}
\begin{itemize}
\item
$\Alphabet=\{0,1\}$.
\item
$w=110101100110011$.
\item
$001100\in \FlxPal(w)$.
\item
$\lps(1101011001100)=001100$.
\item
$\StdExt(110101100110,1)=1101011001101$.
\item
$\stdPalRep(w,001100)=\lps(1101011001101)=1011001101$.
\end{itemize}
\end{example}
We show that the length of a flexed palindrome $r$ is less than the length of the \emph{standard palindromic replacement} $\stdPalRep(w,r)$.
\begin{lemma}
\label{tlmq54flp8j8tt}
If $ux,uy\in \rw$, $x,y\in \Alphabet$, $x\not= y$, and $ux=\StdExt(u,1)$ then $\vert\lps(ux)\vert>\vert \lps(uy)\vert$.
\end{lemma}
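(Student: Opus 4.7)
My plan is to exploit the standard-extension hypothesis to pin down $|\lps(ux)|$ exactly and then bound $|\lps(uy)|$ from above by comparing it with the palindromic-suffix structure of $u$. Since $ux=\StdExt(u,1)$, the definition yields $\lps(ux)=x\lpps(u)x$, so $|\lps(ux)| = |\lpps(u)|+2$. The problem thus reduces to showing $|\lps(uy)| < |\lpps(u)|+2$.

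First I would dispose of the trivial case $|\lps(uy)| \leq 1$, where the inequality is immediate because $|\lps(ux)|\geq 2$. Assuming $|\lps(uy)| \geq 2$, I write $\lps(uy)=yQy$ with $Q$ a palindrome (using that $\lps(uy)$ ends in $y$ and is a palindrome). Since $yQy$ is a suffix of $uy$, $yQ$ is a suffix of $u$, and in particular $Q$ is a palindromic suffix of $u$. A length argument rules out $Q=u$ (which would force $|yQ|>|u|$), so $Q$ is a proper palindromic suffix of $u$, giving the weak bound $|Q| \leq |\lpps(u)|$.

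The crux is upgrading this to the strict inequality $|Q|<|\lpps(u)|$. Suppose otherwise, that $|Q|=|\lpps(u)|$. Because two palindromic suffixes of $u$ of the same length must coincide (being suffixes of a common word), we conclude $Q=\lpps(u)$, hence $y\lpps(u)$ is a suffix of $u$. On the other hand, $\lps(ux)=x\lpps(u)x$ being a suffix of $ux$ forces $x\lpps(u)$ to be a suffix of $u$. Two suffixes of $u$ of the same length coincide, so $x\lpps(u)=y\lpps(u)$, which gives $x=y$ and contradicts the hypothesis $x\neq y$. Therefore $|Q|<|\lpps(u)|$, and $|\lps(uy)|=|Q|+2<|\lpps(u)|+2=|\lps(ux)|$.

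I do not foresee a serious obstacle: the argument is short and purely combinatorial. The one delicate point is the uniqueness step that closes the argument, resting on the elementary fact that a given-length suffix of $u$ is preceded by at most one letter in $u$. This is precisely what forces any \emph{flexed} extension $uy$ to pick a strictly shorter palindromic suffix than the standard extension $ux$.
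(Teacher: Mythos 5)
Your proof is correct and follows essentially the same route as the paper: write $\lps(uy)=yQy$, observe that $Q$ is a proper palindromic suffix of $u$ and hence a suffix of $\lpps(u)$, and use $x\neq y$ (via the fact that a suffix of $u$ of a given length is preceded by a unique letter) to force $|Q|<|\lpps(u)|$. Your version just spells out the degenerate case $|\lps(uy)|\leq 1$ and the equality-exclusion step more explicitly than the paper does.
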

\begin{proof}
Let $yty=\lps(uy)$. From the definition of a \emph{standard extension} we have $\lps(ux)=xvx$, where $v=\lpps(u)$ and hence $t\in \Suffix(v)$. Since $y\not =x$ we have also $yt\in \Suffix(v)$. The lemma follows. 
\end{proof}
An obvious corollary is that a \emph{flexed palindrome} of $w$ is not a prefix of $w$. 
\begin{corollary}
\label{tuipp568tr6}
If $w,r\in \rw$ and $r\in \FlxPal(w)$ then $r\not \in \Prefix(w)$.
\end{corollary}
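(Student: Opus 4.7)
The plan is to argue by contradiction: suppose $r \in \Prefix(w)$. Unfolding the definition of $\FlxPal(w)$ gives a prefix $ub$ of $w$ with $b \in \Alphabet$, $\lps(ub) = r$, and $ub \neq \StdExt(u, 1)$. In particular there is a letter $a \neq b$ with $\StdExt(u, 1) = ua$, and both $ua$ and $ub$ are rich (the former as a standard extension, the latter as a prefix of the rich word $w$).

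The key observation is that, under the contradiction hypothesis, $r$ occurs in $ub$ in two distinguishable ways: as a prefix, since $|r| = |\lps(ub)| \leq |ub|$ and $r \in \Prefix(w)$ forces $r \in \Prefix(ub)$; and as a suffix, by the very definition of $\lps$. First I would apply Proposition \ref{oij5498fr654td222gh} to the prefix $ub$ of $w$: its palindromic suffix $\lps(ub) = r$ must be unioccurrent in $ub$. The only way the prefix occurrence and the suffix occurrence can coincide is $|r| = |ub|$, so $ub$ itself is a palindrome equal to $r$.

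Next I would invoke Lemma \ref{tlmq54flp8j8tt} with $x = a$ and $y = b$, which yields $|\lps(ua)| > |\lps(ub)| = |r| = |ub| = |ua|$. This is absurd since $|\lps(ua)| \leq |ua|$, and the desired contradiction is reached.

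The only delicate point I anticipate is the unioccurrence step: the collapse $|r|=|ub|$ relies on the inequality $|r| \leq |ub|$, which guarantees that the hypothetical prefix occurrence of $r$ actually sits inside $ub$ rather than spilling past it. Once this tight arithmetic is in place, Lemma \ref{tlmq54flp8j8tt} finishes the argument with essentially no further work, so I expect the proof to be short.
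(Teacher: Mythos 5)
Your proof is correct and follows the route the paper intends: the paper states this as an ``obvious'' consequence of Lemma~\ref{tlmq54flp8j8tt} without writing out an argument, and your combination of the unioccurrence of $\lps(ub)$ (Proposition~\ref{oij5498fr654td222gh}) to force $r=ub$ with Lemma~\ref{tlmq54flp8j8tt} to rule that case out is exactly the natural way to fill in that gap. The only cosmetic caveat is the implicit assumption $\vert u\vert\geq 2$ needed for $\StdExt(u,1)$ to be defined, an edge case the paper itself glosses over.
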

\noindent
In \cite{Vesti2014} the \emph{standard extension} has been used to prove that each rich word $w$ can be extended ``richly''; this means that there is $a\in A$ such that $wa$ is rich. \begin{lemma}
\label{uyyye52fggh58o}
If $w\in \rw$ and $\vert w\vert\geq 2$ then $\StdExt(w)\subset R$.
\end{lemma}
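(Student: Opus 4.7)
The plan is to prove by induction on $j \geq 0$ that $\StdExt(w, j) \in \rw$, which gives $\StdExt(w) \subset \rw$. The base case $j = 0$ is immediate from $w \in \rw$. Since $|\StdExt(w, j-1)| \geq |w| \geq 2$, the inductive step reduces to the following one-step claim: \emph{if $u \in \rw$ with $|u| \geq 2$, then $ua := \StdExt(u, 1)$ is rich}. To handle this, I will invoke Proposition~\ref{oij5498fr654td222gh}: every proper prefix of $ua$ is a prefix of the rich word $u$ and therefore already possesses a unioccurrent palindromic suffix, so it suffices to show that $\lps(ua) = a\lpps(u)a$ is unioccurrent in $ua$.

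Suppose for contradiction that $a\lpps(u)a$ occurs at least twice in $ua$. One occurrence is the suffix of $ua$, so any second occurrence lies entirely inside $u$; hence $a\lpps(u)a \in \Factor(u)$. Reading off the central palindrome from this embedded copy yields an occurrence of $\lpps(u)$ in $u$ starting at some position $k$ with $2 \leq k \leq |u| - |\lpps(u)|$, i.e., strictly between position $1$ and the starting position $j_2 := |u| - |\lpps(u)| + 1$ of the suffix occurrence of $\lpps(u)$.

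I will derive a contradiction via Proposition~\ref{yy65se85bj5d}. Let $j_1$ denote the largest starting position $< j_2$ at which $\lpps(u)$ occurs in $u$; this is well-defined since $k$ is such a position. Then $j_1$ and $j_2$ are consecutive occurrences of $\lpps(u)$ in $u$, so the associated complete return is a palindromic factor of $u$ whose right endpoint coincides with the end of $u$. As a palindromic suffix of $u$, its length equals $|u| - j_1 + 1 > |u| - j_2 + 1 = |\lpps(u)|$. Because $\lpps(u)$ is by definition the longest palindromic suffix of $u$ of length strictly less than $|u|$, any longer palindromic suffix must coincide with $u$ itself, forcing $j_1 = 1$. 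But $k$ satisfies $2 \leq k < j_2$ and is a position of $\lpps(u)$ strictly between $1$ and $j_2$, contradicting the maximality of $j_1 = 1$. This closes the induction.

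The main obstacle I anticipate is the careful bookkeeping of two definitional edge points: first, justifying that a palindromic suffix of $u$ longer than $\lpps(u)$ must equal $u$, which rests on parsing ``longest proper palindromic suffix'' as ``longest palindromic suffix of length $< |u|$''; and second, noting that the degenerate case $|\lpps(u)| = |u| - 1$ trivializes, because then $a\lpps(u)a = ua$ and unioccurrence is automatic, so no genuine content is required before the complete-return argument kicks in.
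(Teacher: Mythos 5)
Your proposal is correct, and its skeleton coincides with the paper's: reduce to the one-step claim $\StdExt(u,1)\in \rw$, then invoke Proposition~\ref{oij5498fr654td222gh} to reduce that to the unioccurrence of $\lps(ua)=a\lpps(u)a$ in $ua$. Where you diverge is in how that unioccurrence is established. The paper disposes of it in one sentence --- ``$p$ is unioccurrent in $w$, hence $xpx$ is unioccurrent in $\StdExt(w,1)$'' --- whereas you run a complete-return argument: a second occurrence of $a\lpps(u)a$ would force an internal occurrence of $\lpps(u)$, and the complete return between the last such occurrence and the suffix occurrence is, by Proposition~\ref{yy65se85bj5d}, a palindromic suffix of $u$ longer than $\lpps(u)$, hence equal to $u$ itself, which contradicts the existence of the internal occurrence. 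This extra work is not wasted: the paper's premise that $\lpps(w)$ is unioccurrent in $w$ is literally false when $w$ is a palindrome (e.g.\ $w=aba$ with $\lpps(w)=a$), since then $\lpps(w)$ is both a prefix and a proper suffix of $w$; the conclusion still holds, but it is exactly your complete-return argument (or something equivalent) that is needed to cover that case. So your route is slightly longer but closes a gap that the paper's terse justification leaves open, at the cost of invoking the complete-return machinery of Proposition~\ref{yy65se85bj5d} rather than only Proposition~\ref{oij5498fr654td222gh}.
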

\begin{proof}
Obviously it is enough to prove that $\StdExt(w,1)\in \rw$, since for every $t\in \StdExt(w)\setminus\{w\}$ there is a rich word $\bar t$ such that $t=\StdExt(\bar t,1)$.\\ 
Let $xpx=\lps(\StdExt(w,1))$, where $x\in \Alphabet$. Proposition \ref{oij5498fr654td222gh} implies that we need to prove that $xpx$ is unioccurrent in $\StdExt(w,1)$. Realize that $p$ is unioccurrent in $w$, hence $xpx$ is unioccurrent in $\StdExt(w,1)$.
\end{proof}
\noindent
To simplify the proofs of the paper we introduce a function $\MaxStdExt(u,v)$ to be the longest prefix $z$ of $u$ such that $z$ is also a \emph{standard extension} of $v$:
\begin{definition} Let
$\MaxStdExt(u,v)=\MaxLengthWord( \{\StdExt(v)\cap \Prefix(u)\})$. We call $\MaxStdExt(u,v)$ a \emph{maximal standard extension} of $v$ in $u$.
\end{definition}

The next lemma shows that if a rich word contains factors $ypx$ and $ypy$, where $p$ is a palindrome, $p$ is not a prefix of $w$, $x,y$ are distinct letters, and $ypx$ ``occurs'' before $ypy$ in $w$ then $ypy$ is a \emph{flexed palindrome}.
\begin{lemma}
\label{lkjtoi654fro89}
If $w,v,p\in\rw$, $v\in \Prefix(w)$, $p\not \in \Prefix(w)$, $x,y\in \Alphabet$, $x\not=y$, $ypx\in \Suffix(v)$, $ypy\not \in \Factor(v)$, and $ypy\in \Factor(w)$ then $ypy\in \FlxPal(w)$.
\end{lemma}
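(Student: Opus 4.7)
The plan is to let $v'$ be the shortest prefix of $w$ with $ypy\in\Factor(v')$ and to show that $\lps(v')=ypy$ while $v'\ne\StdExt(\rtrim(v'),1)$, which, by the definition of $\FlxPal$, immediately gives $ypy\in\FlxPal(w)$. By the minimality of $v'$, the factor $ypy$ occurs as a suffix of $v'$ and is unioccurrent in $v'$; also $v$ is a proper prefix of $v'$, since $ypy\notin\Factor(v)$ but $ypy\in\Factor(v')$. Under the implicit assumption that $p$ is a palindrome (required because every element of $\FlxPal(w)$ is a palindrome, and stated explicitly in the informal version of the lemma in the introduction), $ypy$ is a palindromic suffix of $v'$, so $|\lps(v')|\ge|ypy|$. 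If this inequality were strict, $\lps(v')$ would be a strictly longer palindrome ending in $ypy$; palindromicity of $\lps(v')$ would then force it to also start with $ypy$, producing a second occurrence of $ypy$ inside $v'$ and contradicting unioccurrence. Hence $\lps(v')=ypy$.

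Let $u=\rtrim(v')$. First I would verify $|v|<|u|$: the case $v=u$ would give $v'=vy$, and matching the last $|ypy|$ letters of $v'$ with $ypy$ while $v$ ends in $ypx$ forces $p=y^{|p|}$ and $x=y$, contradicting $x\ne y$. Now suppose for contradiction that $v'=\StdExt(u,1)$. Then by the definition of standard extension $\lps(v')=y\lpps(u)y$, and matching with $\lps(v')=ypy$ gives $\lpps(u)=p$. The hypothesis $p\notin\Prefix(w)$ enters here: $u$ ends in $yp$, so if $u$ were itself a palindrome, it would start with $(yp)^R=py$ (using $p=p^R$), forcing $p\in\Prefix(u)\subseteq\Prefix(w)$. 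So $u$ is not a palindrome, and consequently $\lps(u)=\lpps(u)=p$.

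Since $u$ is rich (as a prefix of $w$), Proposition~\ref{oij5498fr654td222gh} applied to the prefix $u$ of itself shows that $p=\lps(u)$ is unioccurrent in $u$. I would then derive the contradiction by exhibiting two distinct occurrences of $p$ in $u$: one at the end of $u$, ending at position $|u|$, and one inside the $ypx$-suffix of $v$, ending at position $|v|-1$ of $u$. Because $|v|-1<|u|$ these are distinct occurrences, contradicting unioccurrence of $p$. Therefore $v'\ne\StdExt(u,1)$, and the definition of $\FlxPal$ gives $ypy=\lps(v')\in\FlxPal(w)$. I expect the most delicate part to be the tight use of unioccurrence twice—once to pin down $\lps(v')=ypy$ and once to contradict $v'=\StdExt(u,1)$—both relying crucially on $x\ne y$ and on $p$ being a palindrome.
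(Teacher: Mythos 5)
Your proof is correct and follows the same skeleton as the paper's: both pass to the shortest prefix of $w$ ending in $ypy$ (your $v'=uy$, the paper's $\bar v y$), note that $ypy$ is its unioccurrent longest palindromic suffix, and then show this prefix is not a standard extension of $u$. You also rightly flag that $p$ must implicitly be a palindrome; the paper's statement omits this hypothesis, though its own proof and the lemma's only application (in Proposition~\ref{eek54w56pk}) rely on it. The one genuine divergence is the final step. The paper works forward: from the two occurrences of $p$ in $\bar v$ (one ending at position $\vert v\vert-1$ from $ypx\in\Suffix(v)$, one at the end from $yp\in\Suffix(\bar v)$) it invokes Proposition~\ref{yy65se85bj5d} to see that the complete return to $p$ at the end of $\bar v$ is a palindrome, hence lies inside $\lps(\bar v)=\lpps(\bar v)$, giving $\vert\lpps(\bar v)\vert>\vert p\vert$, and then applies Lemma~\ref{tlmq54flp8j8tt}. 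You instead argue by contradiction: assuming $v'=\StdExt(u,1)$ forces $\lps(u)=\lpps(u)=p$, and the same two occurrences of $p$ then violate the unioccurrence of the longest palindromic suffix guaranteed by Proposition~\ref{oij5498fr654td222gh}. Your route is a bit more economical, dispensing with both the complete-return machinery and Lemma~\ref{tlmq54flp8j8tt}; the paper's version yields the slightly stronger intermediate fact $\vert\lpps(\bar v)\vert>\vert p\vert$. Both arguments use $p\notin\Prefix(w)$ in exactly the same way (to rule out $u$, respectively $\bar v$, being a palindrome) and use $x\neq y$ to manufacture the second occurrence of $p$.
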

\begin{proof}
Let $\bar v$ be such that $\bar vy\in \Prefix(w)$, $ypy\in \Suffix(\bar vy)$, and $\occur(\bar vy, ypy)=1$. Let $u=\lps(\bar v)$. Because $p\not \in \Prefix(w)$ it follows that $u=\lpps(\bar v)=\lps(\bar v)$ and thus there is $z\in \Alphabet$ such that $zu\in \Suffix(\bar v)$. Obviously $v\in \Prefix(\bar v)$ and hence $\occur(\bar v,p)>1$. Proposition \ref{yy65se85bj5d} implies that $\occur(u,p)>1$, since the complete return to $p$ which is a suffix of $\bar v$ must a suffix of $u$. It follows that $yp\in \Suffix(u)$ and Lemma \ref{tlmq54flp8j8tt} implies that $ypy\in \FlxPal(w)$.
\end{proof}

\section{Removing flexed points}
We define formally the set $\Gamma$ mentioned in the introduction. 
An element $(w,r)$ of the set $\Gamma$ represents a rich word $w$ for which we are able to construct a new rich word $\bar w$ such that $\bar w$ does not contain the \emph{flexed palindrome} $r$, but $\bar w$ have certain common prefixes and suffixes with $w$. We define that $r$ is one of the longest \emph{flexed palindromes} of $w$ and that $r$ is not a factor of the longest palindromic prefix of $w$. In addition we require that $\vert r\vert>2$ so that the \emph{standard extension} of $\rtrim(r)$ would be defined.
\begin{definition}
\label{ppjsgammao58l}
Let $\Gamma$ be a set defined as follows: $(w,r)\in \Gamma$ if
\begin{enumerate}
\item
$w,r\in \rw$ and
\item
$\vert r\vert>2$.
\item
$r\in \FlxPal(w)$ and
\item \label{lkjowprot585t}
$r\not \in \Factor(\lpp(w))$ and
\item \label{lkjowprot58522}
$\vert r\vert \geq \vert \bar r\vert$ for each $\bar r\in \FlxPal(w)$.
\end{enumerate}
\end{definition}

Given $(w,r)\in \Gamma$, we need to express $w$ as a concatenation of its factors having some special properties. For this reason we define a function $\parse(w,r)$:
\begin{definition}
\label{yuuumparse5866z}
If $(w,r)\in \Gamma$ then let $\parse(w,r)=(v,z,t)$, where
\begin{itemize}
\item
$v,z,t\in \rw$ and
\item
$vzt=w$ and
\item
$r\in \Suffix(v)$ and
\item
$\occur(w,r)=\occur(v,r)$ and
\item
$vz=\MaxStdExt(vzt,v)$.
\end{itemize}
\end{definition}
\begin{remark}
The prefix $v$ is the shortest prefix of $w$ that contains all occurrences of $r$. The prefix $vz$ is the \emph{maximal standard extension} of $v$ in $w$, and $t$ is such that $vzt=w$. It is easy to see that $v,z,t$ exist and are uniquely determined for $(w,r)\in \Gamma$.
\end{remark}
For an element $(w,r)\in \Gamma$ we define a function $\reducedPrefix(w,r)$ (a \emph{reduced prefix}), which is a prefix of the palindromic closure of some prefix of $w$. In Theorem \ref{knmgh85emn7k} we show that the concatenation of $\reducedPrefix(w,r)$ and $t$ is a rich word having a strictly smaller number of occurrences of $r$ than in $w$, where $(v,z,t)=\parse(w,r)$.
\begin{definition}
\label{p58prt114g}
If $w,r\in \Gamma$ and $(v,z,t)=\parse(w,r)$ then let
$\reducedPrefix(w,r)$ be defined as follows:\\
It follows from Property \ref{lkjowprot585t} of Definition \ref{ppjsgammao58l} that there is $h\in \Prefix(w)$ such that $w=hz^R\lps(v)zt$. Note that $\lps(v)\not=v$ since $r\in \FlxPal(w)$ and thus $r\not \in \Prefix(w)$, see Corollary \ref{tuipp568tr6}. It is clear that $r\in \Prefix(\lps(v))\cap\Suffix(\lps(v))$. This implies that $ hz^Rr\in \Prefix(w)$. We distinguish two cases:
\begin{itemize}
\item
$r\in \Factor(hz^R\rtrim(r))$:\\
Let $g$ be the complete return to $r$ such that $g\in \Suffix(hz^Rr)$. Clearly $rz\in \Prefix(g)$ and $z^Rr\in \Suffix(g)$, since $r\not \in\Factor(\ltrim(r)z)$; recall $r\in \Suffix(v)$ and $\occur(v,r)=\occur(vzt,r)$. Let $\bar g$ be such that $\bar gg=hz^Rr$.\\ We define $\reducedPrefix(w,r)=\bar grz$. Note that $\reducedPrefix(w,r)\in \Prefix(w)$.
\item
$r\not \in \Factor(hz^R\rtrim(r))$:\\
Let $\bar u=\stdPalRep(hz^Rr,r)$.
Clearly $\lps(hz^Rr)=r$ and $\bar u\not =r$. Because $z^R\rtrim(r)\in \Suffix(hz^R\rtrim(r))$, then obviously $U\not=\emptyset$ and $r\not \in \Factor(U)$, where $U=\{u\mid u\in \Prefix(\palclosure(hz^R\rtrim(r)))\mbox{ and }\ltrim(r)z\in \Suffix(u)\}$. We define $\reducedPrefix(w,r)=\MinLengthWord(U)$. Note that $r\not \in \Factor(\reducedPrefix(w,r))$.
\end{itemize}
We call $\reducedPrefix(w,r)$ a \emph{reduced prefix} of $w$ by $r$.
\end{definition}
\begin{remark}
Note in Definition \ref{p58prt114g} in the second case where $r\not \in \Factor(hz^R\rtrim(r))$ that it may happen that $\reducedPrefix(w,r)$ is not a prefix of $w$. However it is a prefix of a palindromic closure of $hz^R\rtrim(r)$, hence the number of \emph{flexed palindromes} remains the same; formally $\vert \FlxPal(hz^R\rtrim(r)))\vert=\vert \FlxPal(\reducedPrefix(w,r))\vert$. Realize that $\palclosure(t)\in \StdExt(t)$ for each $t\in \rw$ and $\vert t\vert\geq 2$.
\end{remark}
\noindent
To clarify the definition of the \emph{reduced prefix} $\reducedPrefix(w,r)$ we present below two examples representing those two cases in the definition.
\begin{example}
\begin{itemize}
\item
$\Alphabet=\{1,2,3,4,5,6,7,8,9\}$.
\item
$w=123999322399932442399932255223993$.
\item
$r=999$.
\item
$v=1239993223999324423999$.
\item
$z=322$.
\item
$t=55223993$.
\item
$\lps(v)=999324423999$.
\item
$h=1239993$.
\item
$w=hz^R\lps(v)zt$.
\item
$g=9993223999\in \Suffix(hz^Rr)=\Suffix(1239993223999)$.
\item
$\bar g=123$.
\item
$\reducedPrefix(w,r)=123999322$.
\end{itemize}
\end{example}
\begin{example}
\begin{itemize}
\item
$\Alphabet=\{1,2,3,4,5,6,7,8,9\}$.
\item
$w=123999599932239949$.
\item
$r=999$.
\item
$v=1239995999$.
\item
$z=32$.
\item
$t=239949$.
\item
$\lps(v)=9995999$.
\item
$h=1$.
\item
$w=hz^R\lps(v)zt$.
\item
$\StdExt(hz^R\rtrim(r), 1)=\StdExt(12399,1)=123993$.
\item
$\bar u=\stdPalRep(123999,999)=3993$.
\item
$\palclosure(12399)=12399321$.
\item
$U=\{1239932\}$.
\item
$\reducedPrefix(w,r)=1239932$.
\end{itemize}
\end{example}
We know that the \emph{reduced prefix} $\reducedPrefix(w,r)$ may not be a prefix of $w$, however we show that the longest common prefix of $\reducedPrefix(w,r)$ and $w$ is longer than $\vert r\vert-1$.
\begin{lemma}
\label{ru539e5txx2}
If $(w,r)\in \Gamma$ and $u=\reducedPrefix(w,r)$ then $\vert\lcp(u,w)\vert\geq \vert r\vert -1$.
\end{lemma}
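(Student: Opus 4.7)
The plan is to unpack the two cases that appear in Definition \ref{p58prt114g} defining $u = \reducedPrefix(w,r)$. In both, I intend to identify an explicit prefix of $w$ that $u$ shares enough with.

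First, in the case $r\in \Factor(hz^R\rtrim(r))$, the reduced prefix is $u=\bar g r z$ where $\bar g g = hz^R r \in \Prefix(w)$ and $g$ is the complete return to $r$ ending at position $|hz^R r|$. By the remark after the definition, $u$ is itself a prefix of $w$, so $\lcp(u,w)=u$ and $|u|\geq |r|$, well above the required $|r|-1$. This case is essentially free once one reads off that $u\in \Prefix(w)$.

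Second, in the case $r\notin \Factor(hz^R\rtrim(r))$, set $s=hz^R\rtrim(r)$. Note $s\in \Prefix(w)$ since $hz^Rr\in \Prefix(w)$. By construction $u$ is a prefix of $\palclosure(s)$ that has $\ltrim(r)z$ as a suffix, which immediately yields $|u|\geq |\ltrim(r)z|=|r|-1+|z|\geq |r|-1$. Now I would compare $|u|$ with $|s|$. If $|u|\leq |s|$, then $u\in \Prefix(s)\subseteq \Prefix(w)$, so $|\lcp(u,w)|=|u|\geq |r|-1$. If $|u|>|s|$, then $s$ is a prefix of $u$ (because both are prefixes of $\palclosure(s)$, which itself starts with $s$), and $s$ is a prefix of $w$, giving $|\lcp(u,w)|\geq |s|=|h|+|z|+|r|-1\geq |r|-1$.

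The argument is a routine case analysis; there is no real obstacle beyond correctly parsing the second clause of Definition \ref{p58prt114g} and observing that $\palclosure(s)$ begins with $s$. The only subtle point is to avoid the temptation to try to control the tail of $u$ lying inside the appended reversal $s'^R$ of $\palclosure(s)=s'\lps(s)s'^R$ — that tail need not agree with $w$, but the lemma only demands agreement up to length $|r|-1$, which is absorbed comfortably by the common prefix $s$ (or by $u$ itself when $u$ is shorter than $s$).
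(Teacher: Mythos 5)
Your proof is correct and follows essentially the same route as the paper's: dispose of the first case by noting $u\in\Prefix(w)$ with $r\in\Factor(u)$, and in the second case split on whether $u$ or $hz^R\rtrim(r)$ is the longer prefix of $\palclosure(hz^R\rtrim(r))$, using $\ltrim(r)z\in\Suffix(u)$ for the length bound. Your write-up just makes explicit the subcase arithmetic that the paper compresses into one sentence.
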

\begin{proof}
In Definition \ref{p58prt114g} in the first case where $r\not \in \Factor(hz^R\rtrim(r))$ and $u\in \Prefix(w)$, it is clear that $r\in \Factor(u)$ and thus $\vert u\vert \geq \vert r\vert$. Hence we need to verify only the second case, where $r\not \in \Factor(hz^R\rtrim(r))$. Either $hz^R\rtrim(r)\in \Prefix(u)$ or $u\in \Prefix(hz^R\rtrim(r))$. Since $\ltrim(r)z^R\in \Suffix(u)$ the lemma follows.
\end{proof}
\noindent
Using the \emph{reduced prefix} we can now define the word $\reduced(w,r)$ (a \emph{reduced word}):
\begin{definition}
Let $\reduced(w,r)=\reducedPrefix(w,r)t$, where $(v,z,t)=\parse(w,r)$ and $(w,r)\in\Gamma$. We call $\reduced(w,r)$ a \emph{reduced word} of $w$ by $r$.
\end{definition}
We show that the longest common suffix of the \emph{reduced word} $\reduced(w,r)$ and $w$ is longer than $\vert r\vert -1$.
\begin{lemma}
\label{ru539e5q441d}
If $(w,r)\in \Gamma$ then $\lcs(\reduced(w,r),w)\vert\geq \vert r\vert -1$.
\end{lemma}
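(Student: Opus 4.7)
The plan is to show, more strongly, that $\ltrim(r)\,z\,t$ is a common suffix of $\reduced(w,r)$ and $w$, where $(v,z,t)=\parse(w,r)$; since $|\ltrim(r)\,z\,t|=(|r|-1)+|z|+|t|\geq |r|-1$, the lemma follows.

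First I would note that both words end in $t$ by construction: $w=vzt$ and $\reduced(w,r)=\reducedPrefix(w,r)\,t$. So it suffices to prove that $\ltrim(r)\,z$ is a suffix of $\reducedPrefix(w,r)$ and also a suffix of $vz$. The latter is immediate, because $r\in\Suffix(v)$ by Definition \ref{yuuumparse5866z}, hence $\ltrim(r)\in\Suffix(v)$ and therefore $\ltrim(r)\,z\in\Suffix(vz)$.

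For the former I would inspect the two cases in Definition \ref{p58prt114g}. In the case $r\in\Factor(hz^R\rtrim(r))$, the definition gives $\reducedPrefix(w,r)=\bar g\,r\,z$, so $r\,z$, and in particular $\ltrim(r)\,z$, is a suffix of $\reducedPrefix(w,r)$. In the case $r\notin\Factor(hz^R\rtrim(r))$, the definition takes $\reducedPrefix(w,r)=\MinLengthWord(U)$ with $U=\{u\in\Prefix(\palclosure(hz^R\rtrim(r)))\mid \ltrim(r)\,z\in\Suffix(u)\}$, so $\ltrim(r)\,z$ is a suffix of $\reducedPrefix(w,r)$ by the very definition of $U$.

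The only thing that is not completely mechanical is checking that $U$ is nonempty in the second case, so that $\MinLengthWord(U)$ is well-defined and the argument is applicable; this is already observed in Definition \ref{p58prt114g} (using that $z^R\rtrim(r)\in\Suffix(hz^R\rtrim(r))$ and that $\palclosure$ appends the reversal of the prefix preceding $\lps$). Once this is in hand, combining the two cases yields $\ltrim(r)\,z\,t\in\Suffix(\reduced(w,r))\cap\Suffix(w)$, and therefore $|\lcs(\reduced(w,r),w)|\geq |\ltrim(r)\,z\,t|\geq |r|-1$, as required.
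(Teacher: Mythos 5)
Your proposal is correct and follows essentially the same route as the paper: both exhibit $\ltrim(r)zt$ as a common suffix of $w$ and $\reduced(w,r)$ and then use $\vert\ltrim(r)\vert=\vert r\vert-1$. The only difference is that you spell out the case analysis behind $\ltrim(r)z\in\Suffix(\reducedPrefix(w,r))$, which the paper dismisses as obvious from Definition \ref{p58prt114g}.
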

\begin{proof}
Given $(w,r)\in \Gamma$ and $(v,z,t)=\parse(w,r)$.
From Definition \ref{p58prt114g} of the \emph{reduced prefix}, it is obvious that $\ltrim(r)z\in \Suffix(\reducedPrefix(w,r))$ and consequently $\ltrim(r)zt\in \Suffix(\reduced(w,r))$. Recall Definition \ref{yuuumparse5866z} of the function $\parse(w,r)$. Since $w=vzt$ and $r\in \Suffix(v)$ it follows that $\ltrim(r)zt\in \Suffix(w)$. This implies that $\ltrim(r)zt$ is a common suffix of $w$ and $\reduced(w,r)$. Because $\vert\ltrim(r)\vert=\vert r\vert -1$ the lemma follows.
\end{proof}
As already mentioned the \emph{reduced prefix} $\reducedPrefix(w,r)$ is not necessarily a prefix of $w$. In such a case $\reducedPrefix(w,r)$ contains palindromic factors that are not factors of the longest common prefix $\lcp(w,\reducedPrefix(w,r))$. We show that none of these palindromes is a factor of $w$. This will be important when proving richness of the word $\reduced(w,r)$. 
\begin{proposition}
\label{eek54w56pk}
If $(w,r)\in \Gamma$, $u=\reducedPrefix(w,r)$, $\bar u=\stdPalRep(w,r)$, and $g=\lcp(w,u)$, then $\Factor_p(u,\bar u)\subseteq \Factor_p(g)$ and $\bar u\not \in \Factor_p(w)$. 
\end{proposition}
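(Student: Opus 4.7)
The plan is to split on the two cases of Definition~\ref{p58prt114g}. In case~1, where $r\in\Factor(hz^R\rtrim(r))$, we have $u=\bar g r z\in\Prefix(w)$, so $g=\lcp(w,u)=u$ and $\Factor_p(u,\bar u)\subseteq\Factor_p(u)=\Factor_p(g)$ is immediate. In case~2, set $s=hz^R\rtrim(r)$, let $\rho=\lpps(s)$, and let $y$ be the standard-extension letter of $s$, so that $\bar u=y\rho y$. The property ``$\bar u\neq r$'' recorded in Definition~\ref{p58prt114g}, combined with Lemma~\ref{tlmq54flp8j8tt}, forces $y\neq x_0$ (where $x_0$ is the first and last letter of $r$) and $\vert\bar u\vert>\vert r\vert$. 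Writing $s=p\rho$, the requirement $\lps(sy)=y\rho y$ forces the last letter of $p$ (the letter immediately preceding $\rho$ in $s$) to equal $y$, so that the first letter of the reflection $p^R$ in $\palclosure(s)=p\rho p^R$ is also $y$.

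For the first assertion in case~2, if $u\in\Prefix(w)$ then $g=u$ and the inclusion is trivial; otherwise $u$ and $w$ diverge at position $\vert s\vert+1$ (where $u$ reads $y$ from $p^R$ while $w$ reads $x_0$), so $g=s$, and I would use the palindromic symmetry of $\palclosure(s)$. Any palindromic factor $P$ of $\palclosure(s)$ having an occurrence at positions $[a,b]$ with $a\geq\vert p\vert+1$ has its mirror occurrence entirely inside $s$, so $P\in\Factor(s)$. Contrapositively, a palindromic factor $P$ of $u$ not in $\Factor(s)$ must occupy an interval $[a,b]$ with $a\leq\vert p\vert$ and $b\geq\vert p\vert+\vert\rho\vert+1$; such a $P$ covers positions $\vert p\vert,\ldots,\vert p\vert+\vert\rho\vert+1$, which spell exactly $y\rho y=\bar u$. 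Thus every such $P$ contains $\bar u$, which gives $\Factor_p(u,\bar u)\subseteq\Factor_p(g)$.

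For $\bar u\notin\Factor_p(w)$, I argue by contradiction. Assume $\bar u\in\Factor_p(w)$ and let $p_0$ be the shortest prefix of $w$ containing $\bar u$ as a factor. An overlap argument using Proposition~\ref{oij5498fr654td222gh} (any longer palindromic suffix of $p_0$ would yield a second, earlier occurrence of $\bar u$) shows $\lps(p_0)=\bar u$. Write $p_0=p_0'y$. If $p_0\neq\StdExt(p_0',1)$, then $\bar u\in\FlxPal(w)$, and $\vert\bar u\vert>\vert r\vert$ contradicts property~5 of Definition~\ref{ppjsgammao58l}. Otherwise $p_0=\StdExt(p_0',1)$ and $\lpps(p_0')=\rho$. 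Next I apply Lemma~\ref{lkjtoi654fro89} with $v=sx_0\in\Prefix(w)$ (the first prefix of $w$ whose longest palindromic suffix is $r$), $p=\rho$, $x=x_0$, and the above $y$. The hypothesis $y\rho x_0\in\Suffix(v)$ is immediate from the standard-extension structure of $s$; the hypothesis $\bar u=y\rho y\notin\Factor(v)$ holds because, by Proposition~\ref{yy65se85bj5d}, a second occurrence of $\rho$ in $s$ would produce a palindromic suffix of $s$ strictly longer than $\rho=\lpps(s)$; and $\rho\notin\Prefix(w)$ because otherwise $\rho$ would be both prefix and suffix of $s$, so the complete return to $\rho$ in $s$ would equal $s$ and force $s$ itself to be a palindrome. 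The lemma then gives $\bar u\in\FlxPal(w)$, once again contradicting property~5.

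The main obstacle I expect is the degenerate possibility that $s$ is itself a palindrome: this would make $\rho$ both prefix and suffix of $s$, hence a prefix of $w$, blocking one hypothesis of Lemma~\ref{lkjtoi654fro89}. Disposing of this case should exploit property~4 of Definition~\ref{ppjsgammao58l} (that $r\notin\Factor(\lpp(w))$) together with the observation that $\rho\in\Prefix(\lpp(w))$ in that case, and it is also what will pin down the implicit identification $\stdPalRep(w,r)=\stdPalRep(hz^Rr,r)$ used throughout case~2.
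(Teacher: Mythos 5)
Your overall route coincides with the paper's. For the first assertion you use the mirror symmetry of $\palclosure(s)$, with $s=hz^R\rtrim(r)$, to show that a palindromic factor of $u$ that is not a factor of $s=\lcp(w,u)$ must straddle the centre and therefore contain $\bar u$; this is exactly the paper's (more tersely stated) argument. For the second assertion you, like the paper, feed the configuration $y\rho x_0\in \Suffix(sx_0)$ into Lemma~\ref{lkjtoi654fro89} and then contradict Property~\ref{lkjowprot58522} of Definition~\ref{ppjsgammao58l} via $\vert \bar u\vert>\vert r\vert$ from Lemma~\ref{tlmq54flp8j8tt}. You are in fact more careful than the paper in checking the hypotheses of Lemma~\ref{lkjtoi654fro89}.

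The genuine gap is the case you flag at the end and then only gesture at: $s=hz^R\rtrim(r)$ may itself be a palindrome, and this is not a vacuous possibility. Over $\Alphabet=\{a,b\}$ take $w=aabaaab$ and $r=aaa$; then $(w,r)\in\Gamma$ with $v=aabaaa$, $z=b$, $t=\epsilon$, $h=aa$, and $s=aabaa$ is a palindrome with $\rho=\lpps(s)=aa\in\Prefix(w)$. Here $\lps(s)=s\neq\rho$, so your decomposition $\palclosure(s)=p\rho p^R$ and the unioccurrence of $\rho$ in $s$ both break down, and the hypothesis $p\not\in\Prefix(w)$ of Lemma~\ref{lkjtoi654fro89} fails, so the lemma cannot be invoked. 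Your proposed repair (``$\rho\in\Prefix(\lpp(w))$ together with Property~\ref{lkjowprot585t}'') is not an argument: Property~\ref{lkjowprot585t} only forbids $r$ from occurring in $\lpp(w)$, and in the example $\lpp(w)=s$ contains $\rho$ without any contradiction, while the conclusion $\bar u=baab\not\in\Factor_p(w)$ still has to be established by other means. So this case needs an actual separate proof, not a remark. (To be fair, the paper's own proof applies Lemma~\ref{lkjtoi654fro89} without verifying $t\not\in\Prefix(w)$ at all, so it silently contains the same hole; you at least located it.)
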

\begin{proof}
From the properties of the palindromic closure it is easy to see that $\Factor_p(\palclosure(f),\lps(f))\subseteq\Factor_p(f)$ for each $f\in \rw$. It means that every palindromic factor of $f$ that is not a factor of $\palclosure(f)$ contains the factor $\lps(f)$. It follows that $\Factor_p(u,\bar u)\subseteq \Factor_p(g)$.\\
We show that $\occur(w,\bar u)=0$. Let $\bar u=xtx$ and $r=ypy$, where $x,y\in \Alphabet$. Obviously $x\not=y$, $py\in \Prefix(t)$, and $yp\in \Suffix(p)$. Thus $xty\in \Factor(w)$. Lemma \ref{lkjtoi654fro89} implies that $\bar u\in \Factor(w)$ if and only if $\bar u\in \FlxPal(w)$. In addition Lemma \ref{tlmq54flp8j8tt} implies that $\vert\bar u\vert>\vert r\vert$. This is a contradiction to Property \ref{lkjowprot58522} of Definition \ref{ppjsgammao58l}. Hence $\bar u\not \in \Factor_p(w)$. This completes the proof. 
\end{proof}

The main theorem of the paper states the the \emph{reduced word} $\reduced(w,r)$ is rich, where $(w,r)\in \Gamma$. In addition the theorem asserts that the set of \emph{flexed palindromes} of $\reduced(w,r)$ is a subset of the set of \emph{flexed palindromes} of the word $w$, the number of occurrences of $r$ is strictly smaller in $\reduced(w,r)$ than in $w$, and the longest common prefix and suffix of $\reduced(w,r)$ and $w$ are longer than $\vert r\vert-1$. 
\begin{theorem}
\label{knmgh85emn7k}
If $(w,r)\in \Gamma$ then 
\begin{itemize}
\item
$\reduced(w,r)\in \rw$ and
\item
$\FlxPal(\reduced(w,r))\subseteq\FlxPal(w)$ and
\item
$\occur(\reduced(w,r),r)<\occur(w,r)$ and
\item
$\vert\lcp(\reduced(w,r),w)\vert\geq \vert r\vert-1$ and
\item
$\vert\lcs(\reduced(w,r),w)\vert\geq \vert r\vert-1$.
\end{itemize}
\end{theorem}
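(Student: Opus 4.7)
The last two bullets are exactly Lemmas \ref{ru539e5txx2} and \ref{ru539e5q441d}. For the remaining items, write $(v,z,t)=\parse(w,r)$; in either case of Definition \ref{p58prt114g}, $\reducedPrefix(w,r)$ is a prefix of a rich word (of $w$ itself in the first case, of the palindromic closure $\palclosure(hz^R\rtrim(r))$ in the second, rich by Lemma \ref{uyyye52fggh58o}), so $\reducedPrefix(w,r)\in\rw$ in both cases.

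For the occurrence count, I would exploit $\occur(w,r)=\occur(v,r)$: no occurrence of $r$ in $w$ starts in $zt$ or straddles the boundary between $v$ and $z$, and the same conclusion applies in $\reduced(w,r)$ since Lemma \ref{ru539e5q441d} identifies the last $|r|-1+|z|+|t|$ letters of both words. In the first case of Definition \ref{p58prt114g}, $\reducedPrefix(w,r)=\bar g r z$ replaces the prefix $\bar g g s$ of $vz$, where $g$ is the complete return to $r$ in $\Suffix(hz^Rr)$ and $\lps(v)=rs$; by Proposition \ref{yy65se85bj5d} the word $g$ is a palindrome containing two occurrences of $r$, and $sz$ contributes at least one further occurrence (the ending $r$ of $\lps(v)$), so the count strictly decreases. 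In the second case, $r\notin\Factor(\reducedPrefix(w,r))$ by construction, whereas $\occur(w,r)\geq 1$, so the decrease is immediate.

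The main obstacle is the first bullet, richness of $\reduced(w,r)$. I would use Proposition \ref{oij5498fr654td222gh} and proceed letter by letter along $t$, showing that each prefix of $\reducedPrefix(w,r)\cdot t$ has a unioccurrent palindromic suffix. Prefixes wholly inside $\reducedPrefix(w,r)$ inherit the property from its richness. For an extended prefix $\reducedPrefix(w,r)\cdot t'$ with $t'\in\Prefix(t)$, I would compare with the prefix $vzt'$ of $w$: the two words share the final $|\ltrim(r)zt'|$ letters, so their palindromic suffixes of length at most this bound coincide, and the richness of $w$ yields unioccurrence inside $vzt'$. Proposition \ref{eek54w56pk} then transfers unioccurrence to $\reducedPrefix(w,r)\cdot t'$, because any palindromic factor of $\reducedPrefix(w,r)$ not contained in the common prefix $\lcp(w,\reducedPrefix(w,r))$ must contain $\stdPalRep(w,r)$, which that same proposition places outside $\Factor(w)$. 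The delicate subcase is when the longest palindromic suffix of $\reducedPrefix(w,r)\cdot t'$ would exceed the common-suffix length; there one argues that such a long palindromic suffix forces an additional occurrence of $r$ strictly inside $v$, and, via Proposition \ref{yy65se85bj5d} together with Property \ref{lkjowprot58522} of Definition \ref{ppjsgammao58l}, contradicts the maximality of $r$ among flexed palindromes of $w$.

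Finally, for the inclusion $\FlxPal(\reduced(w,r))\subseteq\FlxPal(w)$, every $\bar r$ in the left-hand side is the longest palindromic suffix of some prefix $p'b$ of $\reduced(w,r)$ with $p'b\neq\StdExt(p',1)$. If $p'b\in\Prefix(\reducedPrefix(w,r))$, the witness is literally a prefix of $w$ in the first case of Definition \ref{p58prt114g}, while in the second case Proposition \ref{eek54w56pk} together with Lemma \ref{uyyye52fggh58o} ensures that the portion of $\reducedPrefix(w,r)$ living in the palindromic closure introduces no palindromic factor outside those of $w$. If $p'b$ extends into $t$, the same suffix-transfer used in the richness argument identifies $\bar r$ with the corresponding flexed palindrome of the rich-word prefix $vzt''\in\Prefix(w)$.
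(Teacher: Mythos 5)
Your proposal follows essentially the same route as the paper: a letter-by-letter induction along $t$, comparing $\reducedPrefix(w,r)\cdot t'$ with the prefix $vzt'$ of $w$ via the shared suffix $\ltrim(r)zt'$, using Proposition \ref{eek54w56pk} to transfer unioccurrence of palindromic suffixes and the maximality of $r$ (Property \ref{lkjowprot58522} of Definition \ref{ppjsgammao58l}) in the delicate case. One justification is off, though the conclusion survives: in the second case of Definition \ref{p58prt114g} the palindromic-closure portion of $\reducedPrefix(w,r)$ \emph{does} introduce palindromic factors lying outside $\Factor(w)$ --- namely $\stdPalRep(w,r)$ and the palindromes containing it, which is exactly what Proposition \ref{eek54w56pk} records --- so the claim that no palindromic factor outside those of $w$ appears is false; what is true, and what the inclusion $\FlxPal(\reduced(w,r))\subseteq\FlxPal(w)$ actually needs, is that this portion is reached by standard extensions only and therefore contributes no new \emph{flexed} palindromes.
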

\begin{proof}
Recall that $\reduced(w,r)=ut$, where $(v,z,t)=\parse(w,r)$  and $u=\reducedPrefix(w,r)$.
Suppose that $up\in \rw$, $\FlxPal(up)\subseteq \FlxPal(vzp)$, where $p\in \Prefix(\rtrim(t))$. In addition suppose that if $\vert p\vert\geq 1$ then $\lps(up)=\lps(vzp)$ and $r\not \in \Factor(\lps(vzp))$. The assumptions obviously hold for $p=\epsilon$. \\
Let $x\in \Alphabet$ be such that $px\in \Prefix(t)$. We show that the assumptions hold also for $upx$.\\
Proposition \ref{oij5498fr654td222gh} implies that if $f\in \rw$ and $y\in \Alphabet$ then $fy\in \rw$ if and only if $fy$ has a unioccurrent palindromic suffix. Using this property we prove the theorem. We distinguish two cases: 
\begin{itemize}
\item
If $\lps(vzpx)\in \FlxPal(w)$ then Property \ref{lkjowprot58522} of Definition \ref{ppjsgammao58l} implies that $\lps(vzpx)\in\Suffix(\ltrim(r)zpx)$. From Definition \ref{p58prt114g} we know that $\ltrim(r)z\in \Suffix(u)$. This implies that $\lps(vzpx)\in \Suffix(upx)$ and $r\not \in \Factor(\lps(vzpx))$.\\ Proposition \ref{eek54w56pk} implies that $\lps(vzpx)$ is unioccurrent in $upx$. In consequence $\lps(upx)=\lps(vzpx)$ and $upx\in \rw$. Because $\lps(vzpx)\in \FlxPal(w)$ and $\FlxPal(up)\subseteq\FlxPal(vzp)$ we conclude that $\FlxPal(upx)\subseteq\FlxPal(vzpx)$. We do not need to prove that $\lps(upx)\in \FlxPal(upx)$, although it would not be difficult.
\item
If $\lps(vzpx)\not \in \FlxPal(w)$, then $\vert p\vert\geq 1$, because $vz=\MaxStdExt(vzt,v)$. Realize that $\lps(vzy)\in \FlxPal(w)$, where $y\in \Alphabet$ and $vzy\in \Prefix(vzt)$.\\ Hence according to our assumptions we have $\lps(up)=\lps(vzp)$. Obviously $\lps(vzpx)=x\lpps(vzp)x$ and $r\not \in \lpps(vzp)$. \\ Suppose that $r\in \Factor(\lps(vzpx))$. Then $r\in \Prefix(\lps(vzpx))\cap \Suffix(\lps(vzpx))$. This is a contradiction since $\occur(v,r)=\occur(w,r)$, see Definition \ref{yuuumparse5866z}. This implies that $r\not \in\lps(vzpx)$. It follows that $\vert\lps(vzpx)\vert<\vert rzpx\vert$ and that $\lps(vzpx)\in \Suffix(upx)$. In consequence $\lps(upx)=x\lpps(up)x$. Thus $upx$ is a \emph{standard extension} of $up$. We conclude that $\lps(upx)=\lps(vzpx)$, $\lps(upx)\not \in \FlxPal(upx)$, $\FlxPal(upx)\subseteq\FlxPal(vzpx)$, and $upx\in \rw$.
\end{itemize}
So we have $upx\in \rw$ and $\FlxPal(upx)\subseteq\FlxPal(vzpx)$ for each $px\in \Prefix(t)$. The fact that $\occur(ut,r)<\occur(w,r)$ follows simply from the construction of $u=\reducedPrefix(w,r)$, see Definition \ref{p58prt114g}. \\
Lemma \ref{ru539e5txx2} and Lemma \ref{ru539e5q441d} imply that $\vert\lcp(\reduced(w,r),w)\vert\geq \vert r\vert-1$ and
$\vert\lcs(\reduced(w,r),w)\vert\geq \vert r\vert-1$. This completes the proof. 
\end{proof}
\noindent
Two more examples illuminate the construction of $\reduced(w,r)$.
\begin{example}
\begin{itemize}
\item
$\Alphabet=\{1,2,3,4,5,6,7,8\}$.
\item
$w=vzt=12145656547745656545656547874$.
\item
$r=656$.
\item
$v=12145656547745656545656$.
\item
$z=547$.
\item
$t=874$.
\item
$\lps(v)=656545656$.
\item
$u=\reducedPrefix(w,r)=12145656547$.
\item
$\reduced(w,r)=ut=12145656547874$.
\end{itemize}
\end{example}
\begin{example}
\begin{itemize}
$\Alphabet=\{1,2,3,4,5,6,7,8\}$.
\item
$w=vzt=12145656547874$.
\item
$r=656$.
\item
$v=12145656$.
\item
$z=54$.
\item
$t=7874$.
\item
$\lps(v)=656$.
\item
$u=\reducedPrefix(w,r)=12145654$.
\item
$\reduced(w,r)=ut=121456547874$.
\end{itemize}
\end{example}

If a rich word $w$ has a factor $u$, then the palindromic closure of $w$ is rich and contains the factor $u^R$. Hence for us when constructing a rich word containing given factors, it does not matter if $w$ contains $u$ or $u^R$. We introduce the notion of a \emph{reverse-unioccurrent} factor.
\begin{definition}
If $\vert\{u,u^R\}\cap \Factor(w)\vert=1$ then we say that a word $u$ is \emph{reverse-unioccurrent} in $w$, where $w,u\in \rw$.
\end{definition}
We introduce a function $\revUniOccur(w,u,v)$ (a \emph{reverse unioccurrence} of $u,v$ in $w$) which returns a factor of $w$ such that $u,v$ are \emph{reverse unioccurrent}. In addition we require that $u$ or $u^R$ is a prefix and $v$ or $v^R$ is a suffix of $\revUniOccur(w,u,v)$.
\begin{definition}
\label{uuh52r236e}
If $w_1,w_2,w\in \rw$, $w_1\in \Prefix(w)$ and $w_2\in \Suffix(w)$, then let $\revUniOccurBase(w,w_1,w_2)\subset \Factor(w)$ such that $t\in \revUniOccurBase(w,w_1,w_2)$ if:
\begin{itemize}
\item
$t\in \Factor(w)$ and
\item
$w_1,w_2$ are \emph{reverse-unioccurrent} in $t$ and
\item
$\{w_1,w_1^R\}\cap\Prefix(t)\not =\emptyset$ and
\item
$\{w_2,w_2^R\}\cap\Suffix(t)\not =\emptyset$.
\end{itemize}
Let the set $\revUniOccurBase(w,w_1,w_2)$ be ordered and let $\revUniOccur(w,w_1,w_2)$ be the first element of $\revUniOccurBase(w,w_1,w_2)$.
\end{definition}
\begin{remark}
It is not difficult to see that the function $\revUniOccur(r,w_1,w_2)$ is well defined and the set $\revUniOccurBase(w,w_1,w_2)$ is nonempty.
\end{remark}
We define \emph{maximal flexed palindrome} of a rich word $w$, which is a \emph{flexed palindrome} $r$ of $w$, such $(w,r)\in \Gamma$ and $\vert r\vert>n$, where $n$ is a positive integer.
\begin{definition}
Let $\maxFlxPalBase(w,n)=\{r\mid (w,r)\in \Gamma\mbox{ and }\vert r\vert>n \}$, let the set $\maxFlxPalBase(w,n)$ be ordered and let $\maxFlxPal(w,n)$ be the first element of $\maxFlxPalBase(w,n)$. If $\maxFlxPalBase(w,n)=\emptyset$ then we define $\maxFlxPal(w,n)=\epsilon$. We call $\maxFlxPal(w,n)$ a \emph{maximal flexed palindrome} of $w$.
\end{definition}
\begin{remark}
The name ``\emph{maximal flexed palindrome}'' comes from the properties of $\Gamma$. Recall that for a pair $(w,r)$ to be in the set $\Gamma$, it is necessary that $r$ is one of the longest \emph{flexed palindromes} of $w$.
\end{remark}

Next we define the function $\eliminated(w,w_1,w_2)$ (\emph{eliminated word}) that constructs a rich word from $w$ by ``eliminating all'' \emph{flexed palindromes} longer than $m=\max\{\vert w_1\vert, \vert w_2\vert\}$ and keeping the prefix $w_1$ and the suffix $w_2$ of $w$.
\begin{definition}
If $w,w_1,w_2\in \rw$, $m=\max\{\vert w_1\vert, \vert w_2\vert\}$, $w_1\in \Prefix(w)$, and $w_2\in \Suffix(w)$, then let $\eliminated(w,w_1,w_2)$ be the result of the following procedure:
\begin{verbatim}
01 INPUT: w,m,w_1,w_2;
02 res: = ruo(w,w_1,w_2);
03 r := maxFlxPal(res,m);
04 WHILE r is nonempty word 
05 DO
06  res := rdcWrd(res,r);
07  res := ruo(res,w_1,w_2);
08  r := maxFlxPal(res,m);
09 END-DO;
10 RETURN res;
\end{verbatim}
\end{definition}
The call of the function $\revUniOccur$ on the lines $02$ and $07$ guarantees that $w_1,w_2$ are \emph{reverse-unioccurrent} in the word $res$ and that $\{w_1,w_1^R\}\cap \Prefix(res)\not =\emptyset$ and $\{w_2, w_2^R\}\cap \Suffix(res)\not =\emptyset$. Realize that it is not guaranteed that $w_1,w_2$ are \emph{reverse-unioccurrent} in $\reduced(res,r)$, even if $w_1,w_2$ are \emph{reverse-unioccurrent} in $res$.\\
Clearly, the facts that $\bar t$ is \emph{reverse unioccurrent} in a rich word $t$ and $\bar t\in \Prefix(t)$ imply that $\lppp(t)\in \Prefix(\bar t)$. Thus if $r$ is a \emph{flexed palindrome} of $t$ longer than the prefix $\bar t$, then $r$ is not a factor of $\lppp(t)$ and hence $r$ satisfies Property \ref{lkjowprot585t} of Definition \ref{ppjsgammao58l}. In consequence the word $\eliminated(w,w_1,w_2)$ contains no \emph{flexed palindrome} longer than $m$.\\
The call of the function $\reduced(res,r)$ on the line $06$ makes obviously sense, since if $\maxFlxPal(w,m)\not =\epsilon$ then $(w,\maxFlxPal(w,m))\in \Gamma$. \\
In addition, because $\vert r\vert>\max\{\vert w_1, w_2\}$, Theorem \ref{knmgh85emn7k} asserts that $\{w_1,w_1^R\}\cap \Prefix(\reduced(res,r))\not =\emptyset$ and $\{w_2, w_2^R\}\cap \Prefix(\reduced(res,r))\not =\emptyset$; consequently $\{w_1,w_1^R\}\cap \Prefix(res)\not =\emptyset$ and $\{w_2,w_2^R\}\cap\Suffix(res)\not =\emptyset$ on the line $06$. \\
Moreover Theorem \ref{knmgh85emn7k} implies that the procedure finishes after a finite number of iterations, because $\occur(\reduced(w,r),r)<\occur(w,r)$ and $\FlxPal(\reduced(w,r))\subseteq \FlxPal(w)$. The number of iterations is bounded by the number $\sum_{r\in \FlxPal(w)}\occur(w,r)$. Note that several occurrences of $r$ may be ``eliminated'' in one iteration. Hence we proved the following lemma:
\begin{lemma}
\label{nb23cc25bn}
If $(w,r)\in \Gamma$, $w_1\in \Prefix(w)$, $w_2\in \Prefix(w)$, $m=\max\{\vert w_1\vert, \vert w_2\vert\}$, and $t=\eliminated(w,w_1,w_2)$ then
\begin{itemize}
\item
$t\in \rw$ and 
\item
$\{w_1,w_1^R\}\cap\Prefix(t)\not =\emptyset$ and 
\item
$\{w_2,w_2^R\}\cap\Suffix(t)\not=\emptyset$ and 
\item
for each $r\in \FlxPal(t)$ we have $\vert r\vert\leq m$.
\end{itemize}
\end{lemma}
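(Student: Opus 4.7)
The statement summarizes the discussion immediately preceding it, so my plan is to package those observations as a loop invariant, a termination argument, and a final length bound, with Theorem \ref{knmgh85emn7k} as the main engine.

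For the invariant I would prove, by induction on the number of completed iterations, that after line $02$ and after each iteration $res\in \rw$, $\{w_1,w_1^R\}\cap\Prefix(res)\ne\emptyset$, and $\{w_2,w_2^R\}\cap\Suffix(res)\ne\emptyset$. At line $02$ this is immediate from Definition \ref{uuh52r236e} and from richness passing to factors. For the inductive step, having just passed the test on line $04$ we have $r=\maxFlxPal(res,m)\ne\epsilon$, so $(res,r)\in \Gamma$; Theorem \ref{knmgh85emn7k} then ensures $\reduced(res,r)\in \rw$ and that the common prefix and suffix with $res$ have length at least $|r|-1\geq m\geq \max\{|w_1|,|w_2|\}$, so whichever element of $\{w_1,w_1^R\}$ is a prefix of $res$ remains a prefix of $\reduced(res,r)$, and symmetrically on the suffix side. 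Line $07$ then passes to a factor carrying the relevant prefix and suffix, re-establishing the full invariant.

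For termination I would track the nonnegative integer $N(res):=\sum_{s\in\FlxPal(res)}\occur(res,s)$. Theorem \ref{knmgh85emn7k} yields $\FlxPal(\reduced(res,r))\subseteq\FlxPal(res)$ and $\occur(\reduced(res,r),r)<\occur(res,r)$, so $N$ strictly decreases on line $06$; line $07$ passes to a factor so it cannot increase $N$. The loop therefore exits after finitely many iterations, at which point $\maxFlxPal(res,m)=\epsilon$ and we set $t:=res$.

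The step I expect to require the most care is reading off from $\maxFlxPal(t,m)=\epsilon$ the conclusion that every $r\in\FlxPal(t)$ satisfies $|r|\leq m$. Suppose instead $r\in\FlxPal(t)$ has $|r|>m$, chosen of maximal length; then $|r|>2$. The invariant furnishes a reverse-unioccurrent prefix $\bar t\in\{w_1,w_1^R\}$ of $t$ with $|\bar t|\leq m$, and the key observation, anticipated in the text, is that reverse-unioccurrence forces $\lppp(t)\in\Prefix(\bar t)$: otherwise $\bar t$ would be a proper prefix of the palindrome $\lppp(t)$, making $\bar t^R$ a suffix of $\lppp(t)$ and hence a factor of $t$, contrary to reverse-unioccurrence. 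Consequently $|\lppp(t)|\leq m<|r|$, so $r\notin\Factor(\lpp(t))$, and the remaining properties of Definition \ref{ppjsgammao58l} are immediate from the choice of $r$; the corner case $\lpp(t)=t$ must be handled separately using that reverse-unioccurrence of $\bar t$ in a palindromic $t$ forces $\bar t=\bar t^R$, after which the previous argument applies. In all cases $(t,r)\in\Gamma$, so $r\in\maxFlxPalBase(t,m)$, contradicting $\maxFlxPal(t,m)=\epsilon$. The four bulleted conclusions of the lemma then read off from the invariant together with this length bound.
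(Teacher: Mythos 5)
Your proof follows essentially the same route as the paper's (which proves this lemma only in the prose immediately preceding its statement): the same loop invariant propagated by Theorem \ref{knmgh85emn7k} together with the bound $|r|-1\geq m$ on the common prefix and suffix, the same termination measure $\sum_{s\in\FlxPal(\cdot)}\occur(\cdot,s)$, and the same mechanism --- reverse-unioccurrence forcing $\lppp(t)\in\Prefix(\bar t)$ --- to show that a flexed palindrome longer than $m$ would place $(t,r)$ in $\Gamma$ and contradict $\maxFlxPal(t,m)=\epsilon$. You in fact supply strictly more detail than the paper at that last step. The one place your extra detail does not close the argument is the corner case you yourself flag: if $\lpp(t)=t$, then every $r\in\FlxPal(t)$ is automatically a factor of $\lpp(t)$, so Property \ref{lkjowprot585t} of Definition \ref{ppjsgammao58l} fails and $(t,r)\notin\Gamma$ no matter what you know about $\bar t$; and more generally, when $\bar t$ is itself a palindrome the deduction ``$\bar t^R\in\Factor(t)$ contradicts reverse-unioccurrence'' is vacuous (since $\bar t^R=\bar t$ is trivially a factor), so $\lppp(t)\in\Prefix(\bar t)$ is not actually forced. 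This is a genuine soft spot, but it is inherited verbatim from the paper's own one-line ``Clearly\dots'' justification rather than introduced by you; apart from making that inherited gap visible, your write-up is a faithful and somewhat more careful rendering of the paper's argument.
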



\section{Words with limited number of flexed points}
What is the maximal length of a word $u$ such that $w$ is \emph{reverse-unioccurrent} in $u$, $w$ is a prefix of $u$, and $u$ has a given maximal number of \emph{flexed palindromes}? The proposition below answers this question.
\begin{proposition}
\label{oiun3218eknn}
If $u,w\in \rw^+$, $w\in \Prefix(u)$, $\vert\FlxPal(u)\setminus\FlxPal(w)\vert\leq k$, $\vert w\vert \leq m$, and $w$ is \emph{reverse-unioccurrent} in $u$ then $\vert u\vert\leq m2^{k+1}$.
\end{proposition}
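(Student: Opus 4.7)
The plan is to prove this by induction on $k$. The key preliminary observation is that every flexed extension step beyond $w$ must contribute a genuinely new flexed palindrome: if $p_{i-1}\to p_{i}$ is a flexed extension with $|p_{i}|>|w|$, then by Proposition \ref{oij5498fr654td222gh} the palindrome $\lps(p_{i})$ is unioccurrent in $p_{i}$, so it cannot equal $\lps(q)$ for any strictly shorter prefix $q$; in particular $\lps(p_{i})\notin\FlxPal(w)$. Hence the hypothesis $|\FlxPal(u)\setminus\FlxPal(w)|\leq k$ caps the number of flexed extension steps from $w$ to $u$ at $k$.

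For the base case $k=0$, every extension from $w$ to $u$ is standard, so $u\in\StdExt(w)$. The palindromic closure $\palclosure(w)$ lies in $\StdExt(w)$ and has length $2|w|-|\lps(w)|$; moreover, since $\palclosure(w)$ is a palindrome with $w$ as prefix, $w^{R}$ appears as its suffix. If $|u|\geq|\palclosure(w)|$ then $\palclosure(w)$ is itself a prefix of $u$, whence $w^{R}\in\Factor(u)$, contradicting the reverse-unioccurrence of $w$ in $u$. Therefore $|u|<|\palclosure(w)|\leq 2|w|\leq 2m=m\cdot 2^{k+1}$.

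For the inductive step, let $i^{*}$ be the largest index at which $p_{i^{*}}$ is obtained from $p_{i^{*}-1}$ by a flexed extension. Then $p_{i^{*}-1}$ is a rich prefix of $u$ with $|\FlxPal(p_{i^{*}-1})\setminus\FlxPal(w)|\leq k-1$, since the last new flexed palindrome has been excluded; reverse-unioccurrence of $w$ persists in $p_{i^{*}-1}$, as any occurrence of $w$ or $w^{R}$ there is inherited by $u$. The induction hypothesis gives $|p_{i^{*}-1}|\leq m\cdot 2^{k}$, whence $|p_{i^{*}}|\leq m\cdot 2^{k}+1$. From $p_{i^{*}}$ onward every extension is standard, and rerunning the base-case argument with $p_{i^{*}}$ in place of $w$ shows that reaching $\palclosure(p_{i^{*}})$ would embed $p_{i^{*}}^{R}$, and therefore $w^{R}$ (as a suffix of $p_{i^{*}}^{R}$), into $\Factor(u)$, again contradicting reverse-unioccurrence of $w$. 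Hence $|u|<2|p_{i^{*}}|$; combining with the inductive bound and carrying the strict inequality from the base case through the arithmetic yields $|u|\leq m\cdot 2^{k+1}$.

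The main obstacle is precisely this last step: since $p_{i^{*}}$ is not itself assumed reverse-unioccurrent in $u$, the base-case argument does not transfer verbatim and must be funneled through $w$, exploiting that $w$ is a prefix of $p_{i^{*}}$ so that $w^{R}$ rides along at the end of $p_{i^{*}}^{R}$. A secondary subtlety is the palindromic case $w=w^{R}$, where the constraint $w^{R}\notin\Factor(u)$ becomes vacuous and so the contradiction from reaching $\palclosure(w)$ must instead be extracted by tracking how further standard extensions force additional occurrences of $w$ to appear inside the growing palindromic suffix, reviving the conflict through unioccurrence.
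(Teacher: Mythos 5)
Your overall strategy is the same as the paper's: bound the number of flexed extension steps beyond $w$ by $k$ (each such step produces a new, unioccurrent, hence previously unseen flexed palindrome), and show that along any run of standard extensions the length can at most double before the palindromic closure is reached and a second occurrence of $w$ up to reversal appears. The paper phrases this as a single ``doubling between consecutive flexed points'' claim rather than an induction on $k$, but the content is identical.

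There is, however, one genuine gap, and it is exactly the one you flag in your last sentence without resolving: the base case collapses when $w$ is a palindrome, because then $\palclosure(w)=w$ and reaching $\palclosure(w)$ produces nothing. This case cannot be waved away. Under the literal set-membership reading of \emph{reverse-unioccurrent} the statement would actually be false for palindromic $w$ (take $w=00$, $u=0^n$), so the hypothesis must be read as ``$w$ and $w^R$ have exactly one occurrence in $u$ in total,'' and then the palindromic case is precisely the one where your contradiction via $\palclosure(w)$ is unavailable while the hypothesis still has content. The repair is the one the paper uses implicitly: do not close $w$ itself but a strictly longer prefix on the standard-extension chain. If $u\neq w$, set $v=\StdExt(w,1)$; then $\palclosure(v)\in\StdExt(w)$ lies on the (unique, deterministic) chain of standard extensions, satisfies $\vert\palclosure(v)\vert\leq 2\vert v\vert-\vert\lps(v)\vert\leq 2\vert w\vert+1$, and is a palindrome of length strictly greater than $\vert w\vert$ with $w$ as a prefix; hence it contains an occurrence of $w^R$ ending strictly after position $\vert w\vert$, i.e.\ a second occurrence of $w$ up to reversal, whether or not $w=w^R$. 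The same device should be used in your inductive step with $p_{i^*}$ in place of $w$ (there the issue in fact self-resolves, since a palindromic $p_{i^*}$ already contains two occurrences of $w$ up to reversal), and the bound $\vert\palclosure(v)\vert\leq 2\vert v\vert-1$ also absorbs the off-by-one in your final arithmetic, where $\vert u\vert<2\vert p_{i^*}\vert$ with $\vert p_{i^*}\vert\leq m2^{k}+1$ only gives $\vert u\vert\leq m2^{k+1}+1$ as written.
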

\begin{proof}
Let $\bar u=\StdExt(u,1)$; then obviously $\vert\palclosure(\bar u)\vert<2\vert \bar u\vert$, $\palclosure(\bar u)\in \StdExt(u)$, and $w$ is not \emph{reverse-unioccurrent} in $\palclosure(\bar u)$, since $w^R\in \Suffix(\palclosure(\bar u))$.\\
It follows that if $v_1,v_2\in \Prefix(\bar u)$ such that $v_1$ is \emph{reverse unioccurrent} in $\bar u$, $v_1\in \Prefix(v_2)$, $\vert\FlxPal(v_2)\setminus\FlxPal(v_1)\vert=1$, and $\lps(v_2)\in \FlxPal(v_2)$ then $\vert \ltrim(v_2)\vert< 2\vert v_1\vert$, since $\ltrim(v_2)\in \StdExt(v_1)$. This implies that $\vert v_2\vert\leq 2\vert v_1\vert$. The proposition follows.
\end{proof}
\begin{remark}
The proof asserts that if $v_1,v_2$ are two prefixes of a word $u$ such that the longest palindromic suffix of $v_2$ is the only \emph{flexed palindrome} in $v_2$ which is not a factor of $v_1$, then $v_2$ is at most twice longer than $v_1$ on condition that $v_1^R$ is not a factor of $\ltrim(v_2)$. Less formally it means that the length of a word can grow at most twice before next \emph{flexed palindrome} appears. Note that for $k=1$ we have $\vert u\vert\leq 2m$, which makes sense, since the palindromic closure of a nonpalindromic  word $w$ is at most twice longer than $w$ and $w$ is not \emph{reverse-unioccurrent} in $\palclosure(w)$; realize that $w^R\in \Suffix(\palclosure(w))$. 
\end{remark}

\noindent
In \cite{RUKAV_PALCOMPLXRICH_preprint} the author showed an upper bound for the number of palindromic factors of given length in a rich word:
\begin{proposition}[\cite{RUKAV_PALCOMPLXRICH_preprint},Corollary 2.23]
\label{truksip356ju}
If $w\in \rw$ and $n>0$ then $$\vert F_p(w) \cap \Alphabet^n\vert\leq (q+1)n(4q^{10}n)^{\log_2{n}}\mbox{.}$$
\end{proposition}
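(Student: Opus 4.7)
The plan is to bound $f(n) := \vert\Factor_p(w) \cap \Alphabet^n\vert$ by induction on $n$. The target shape $(q+1)n(4q^{10}n)^{\log_2 n}$ strongly suggests a recursion of the form $f(n) \leq (4q^{10}n)\,f(\lceil n/2\rceil)$ together with the base estimate $f(1) \leq q+1$: unrolling this halving recursion $\log_2 n$ times produces exactly the stated bound. Hence the entire task reduces to justifying one such halving step.

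To set up the recursion, I would partition palindromes $p \in \Factor_p(w) \cap \Alphabet^n$ according to whether $\vert\lpps(p)\vert \leq n/2$ (the long-period case) or $\vert\lpps(p)\vert > n/2$ (the short-period case). In the long-period case $p$ is essentially determined by its longest proper palindromic suffix $\lpps(p) \in \Factor_p(w)$ of length at most $n/2$, together with a short combinatorial label encoding the front extension. Richness is crucial here: Proposition \ref{tuob5ou6e} (together with its corollary that the chain of palindromic suffixes is strictly decreasing) forces the set of extension labels to be polynomial in $q$ and $n$ rather than exponential, so these palindromes are counted by $f(\lceil n/2\rceil)$ times a polynomial multiplier.

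In the short-period case, $\vert\lpps(p)\vert > n/2$ forces $p$ to have a period at most $n/2$ by the standard palindrome-period theorem, so $p$ is a prefix of $u^\omega$ for some primitive $u$ of length $< n/2$. Each such $u^\omega$ hosts at most two palindromes of length $n$, and the number of primitive roots $u$ that actually arise as periods of palindromic factors of the rich word $w$ is again polynomial in $q$ and $n$: each such $u$ is (up to conjugacy) a factor of $w$ subject to the complete-return palindromicity of Proposition \ref{yy65se85bj5d}, which sharply limits how many distinct primitive roots can appear. Together the short-period palindromes contribute at most a polynomial in $q$ and $n$, which is folded into the single multiplier $4q^{10}n$.

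The main obstacle will be nailing down the per-level constant --- specifically, that the multiplier is $4q^{10}n$ rather than something larger. This requires careful bookkeeping: for each palindromic "seed" $s$ of length at most $n/2$, one must count precisely how many length-$n$ palindromes of $w$ have $s$ as their longest proper palindromic suffix, and extract from richness (via Propositions \ref{tuob5ou6e} and \ref{yy65se85bj5d}) that this count is polynomially bounded with a controlled exponent in $q$. Working out the exponent $10$ explicitly is where I expect the bulk of the detailed combinatorial analysis to lie, and I would follow the argument of Corollary 2.23 of \cite{RUKAV_PALCOMSPLXRICH_preprint} rather than reinvent it from scratch.
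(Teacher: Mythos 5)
The first thing to note is that the paper does not prove this statement at all: it is imported verbatim as Corollary 2.23 of \cite{RUKAV_PALCOMPLXRICH_preprint}, so there is no in-paper argument to compare yours against. Your review target is therefore whether your sketch would stand as a proof on its own, and it would not.

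The halving recursion $f(n)\leq (4q^{10}n)\,f(\lceil n/2\rceil)$ with $f(1)\leq q+1$ is a reasonable reverse-engineering of the shape of the bound (though note that unrolling it literally yields $(q+1)(4q^{10}n)^{\log_2 n}$, without the extra factor of $n$ in the statement, so even the bookkeeping of the unrolling does not quite match). The genuine gap is that the two claims which would make the recursion work are asserted rather than argued: (i) that in the long-period case the number of length-$n$ palindromes of $w$ sharing a given $\lpps$ of length at most $n/2$ is polynomially bounded with a controlled exponent in $q$, and (ii) that in the short-period case the number of primitive roots arising as periods of palindromic factors of $w$ is likewise polynomially bounded. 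Propositions \ref{tuob5ou6e} and \ref{yy65se85bj5d} are invoked for both, but no mechanism is given for extracting a \emph{polynomial} count from them, let alone one whose product is exactly the multiplier $4q^{10}n$; the exponent $10$ is the entire quantitative content of the result, and you explicitly defer it to the cited corollary --- which amounts to citing the statement you are trying to prove. As written, the proposal is a plausible decomposition with none of the combinatorics filled in, and it cannot be verified for correctness.
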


\noindent
Proposition \ref{truksip356ju} implies an upper bound for the number of \emph{flexed palindromes}:
\begin{lemma}
\label{trir58r5pk}
If $w\in \rw$, $n>0$, and $\FlxPal(w)\cap \Alphabet^j=\emptyset$ for each $j>n$ then $$\vert \FlxPal(w)\vert \leq (q+1)n^2(4q^{10}n)^{\log_2{n}}\mbox{.}$$
\end{lemma}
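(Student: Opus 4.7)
The plan is to observe that every element of $\FlxPal(w)$ is, by construction, a palindromic factor of $w$, so $\FlxPal(w)\subseteq\Factor_p(w)$. Since the hypothesis rules out flexed palindromes of length exceeding $n$, and since a flexed palindrome is nonempty (the definition $\lps(ub)$ with $b\in\Alphabet$ gives at least the letter $b$), we can partition $\FlxPal(w)$ by length and write
\[
\vert\FlxPal(w)\vert \;=\; \sum_{j=1}^{n}\vert\FlxPal(w)\cap\Alphabet^j\vert \;\leq\; \sum_{j=1}^{n}\vert\Factor_p(w)\cap\Alphabet^j\vert.
\]

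Next I would apply Proposition \ref{truksip356ju} termwise. Each summand is bounded by $(q+1)j(4q^{10}j)^{\log_2{j}}$. The function $j\mapsto (q+1)j(4q^{10}j)^{\log_2{j}}$ is nondecreasing in $j$ for $j\geq 1$ (both the linear factor $j$ and the factor $(4q^{10}j)^{\log_2j}$ increase, noting $4q^{10}\geq 4>1$), so every term is at most the $j=n$ term. Hence
\[
\sum_{j=1}^{n}(q+1)j(4q^{10}j)^{\log_2{j}} \;\leq\; n\cdot (q+1)n(4q^{10}n)^{\log_2{n}} \;=\; (q+1)n^2(4q^{10}n)^{\log_2{n}},
\]
which gives the claimed bound. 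No step should present a real obstacle; the only subtlety is confirming monotonicity of the palindromic-complexity bound in $j$, which is immediate since $4q^{10}\geq 1$ makes both factors in the product nondecreasing.

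\begin{proof}
Every flexed palindrome of $w$ is a palindromic factor of $w$, so $\FlxPal(w)\subseteq \Factor_p(w)$. By hypothesis $\FlxPal(w)\cap \Alphabet^j=\emptyset$ for all $j>n$, and clearly the empty word is not a flexed palindrome. Therefore
\[
\vert\FlxPal(w)\vert \leq \sum_{j=1}^{n}\vert\Factor_p(w)\cap\Alphabet^j\vert.
\]
Proposition \ref{truksip356ju} bounds each summand by $(q+1)j(4q^{10}j)^{\log_2{j}}$. Since $4q^{10}\geq 1$, the map $j\mapsto (q+1)j(4q^{10}j)^{\log_2{j}}$ is nondecreasing on the positive integers, so each summand is at most the $j=n$ value. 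Consequently
\[
\vert\FlxPal(w)\vert \leq n\cdot(q+1)n(4q^{10}n)^{\log_2{n}} = (q+1)n^2(4q^{10}n)^{\log_2{n}}.
\]
\end{proof}
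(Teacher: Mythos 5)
Your proof is correct and follows essentially the same route as the paper: the paper's one-line proof simply asserts the inequality $\sum_{j=1}^n (q+1)j(4q^{10}j)^{\log_2 j}\leq (q+1)n^2(4q^{10}n)^{\log_2 n}$, and you supply the same decomposition by length plus a justification (monotonicity of the summand) that the paper leaves implicit.
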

\begin{proof}
Just realize that $\sum_{j=1}^n (q+1)j(4q^{10}j)^{\log_2{j}}\leq (q+1)n^2(4q^{10}n)^{\log_2{n}}$.
\end{proof}

\noindent
From Lemma \ref{nb23cc25bn}, Lemma $\ref{trir58r5pk}$ and Proposition \ref{oiun3218eknn} we obtain the result of the article:
\begin{corollary}
\label{ottlwq596plk}
If $w,w_1,w_2$ are rich words, $w_1,w_2\in \Factor(w)$, $m=\max{\{\vert w_1\vert,\vert w_2\vert\}}$ then there exists also a rich word $\bar w$ such that $w_1,w_2\in\Factor(\bar w)$ and $\vert \bar w\vert\leq m2^{k(m)+2}$, where $k(m)=(q+1)m^2(4q^{10}m)^{\log_2{m}}$.
\end{corollary}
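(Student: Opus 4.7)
The plan is to feed $w$ through the $\eliminated$ construction of Section 4 and to close with a palindromic-closure step. The first task is a reduction to the hypotheses of Lemma \ref{nb23cc25bn}: extract a rich factor $w'$ of $w$ (or, if the relative positions of the occurrences of $w_1,w_2$ in $w$ force it, of $w^R$) satisfying $w_1\in\Prefix(w')$ and $w_2\in\Suffix(w')$. In the case where one must pass to $w^R$, the input pair becomes $(w_1^R,w_2^R)$; the palindromic closure at the very end will symmetrically recover $w_1$ and $w_2$.

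Next I would apply Lemma \ref{nb23cc25bn} to $(w',w_1,w_2)$ and set $t=\eliminated(w',w_1,w_2)\in\rw$. By that lemma, some word of $\{w_1,w_1^R\}$ is a prefix of $t$ and some word of $\{w_2,w_2^R\}$ is a suffix of $t$ (both being \emph{reverse-unioccurrent} in $t$), and every \emph{flexed palindrome} of $t$ has length at most $m$. Lemma \ref{trir58r5pk} then yields $|\FlxPal(t)|\leq(q+1)m^2(4q^{10}m)^{\log_2 m}=k(m)$. Choose $w^*\in\{w_1,w_1^R\}\cap\Prefix(t)$; then $|w^*|\leq m$, $w^*$ is reverse-unioccurrent in $t$, and $|\FlxPal(t)\setminus\FlxPal(w^*)|\leq k(m)$, so Proposition \ref{oiun3218eknn} applied with $u=t$ and with $w^*$ in the role of its ``$w$'' gives $|t|\leq m\cdot 2^{k(m)+1}$.

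Finally, set $\bar w=\palclosure(t)$. Since $\palclosure(t)\in\StdExt(t)$, Lemma \ref{uyyye52fggh58o} makes $\bar w$ rich, and $|\bar w|\leq 2|t|\leq m\cdot 2^{k(m)+2}$. Because $\bar w$ is a palindrome with $t$ as a prefix (and $t^R$ as a suffix), each of $w_i$ and $w_i^R$ occurs in $\bar w$ for $i=1,2$; in particular $\{w_1,w_2\}\subseteq\Factor(\bar w)$.

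The main obstacle I anticipate is the initial reduction: verifying carefully that a rich factor $w'$ of $w$ (possibly after passing to $w^R$) with $w_1$ as prefix and $w_2$ as suffix always exists, and checking that the orientation bookkeeping is clean enough that the final palindromic closure simultaneously produces both $w_1$ and $w_2$ as factors, rather than only one representative from each pair $\{w_i,w_i^R\}$. Once this reduction is in place, the length bound is an almost mechanical assembly of the three cited results.
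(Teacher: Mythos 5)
Your proposal is correct and follows essentially the same route as the paper: reduce to a rich word with $w_1$ (up to reversal) as a prefix and $w_2$ (up to reversal) as a suffix, apply $\eliminated$, bound the length via Lemma \ref{trir58r5pk} and Proposition \ref{oiun3218eknn}, and finish with a palindromic closure to recover both orientations at the cost of a factor $2$. You are in fact more explicit than the paper about the orientation bookkeeping in the initial reduction, which the paper dispatches by taking a suitable factor of $\palclosure(w)$ and calling its existence obvious.
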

\begin{proof}
Let $t\in \Factor(\palclosure(w))$ such that $w_1\in \Prefix(t)$ and $w_2\in \Suffix(t)$. Obviously such $t$ exists. 
Consider the word $g=\eliminated(t,w_1,w_2)$. Let $k(m)=(q+1)m^2(4q^{10}m)^{\log_2{m}}$. Lemma $\ref{trir58r5pk}$ and Proposition \ref{oiun3218eknn} imply that $\vert g\vert\geq m2^{k(m)+1}$. Lemma \ref{nb23cc25bn} implies that $g\in \rw$,  $\{w_1,w_1^R\}\cap\Factor(g)\not=\emptyset$, and $\{w_2,w_2^R\}\cap\Factor(g)\not=\emptyset$. Let $\bar w=\palclosure(g)$. It follows that $w_1,w_2\in \Factor(\bar w)$.
Because $\vert\palclosure(g)\vert\leq 2\vert g\vert$, the corollary follows.	
\end{proof}

\section*{Acknowledgments}
The author wishes to thank to Štěpán Starosta for his useful comments. The author acknowledges support by the Czech Science
Foundation grant GA\v CR 13-03538S and by the Grant Agency of the Czech Technical University in Prague, grant No. SGS14/205/OHK4/3T/14.

\bibliographystyle{siam}
\IfFileExists{biblio.bib}{\bibliography{biblio}}{\bibliography{../!bibliography/biblio}}

\end{document}